\documentclass[12pt,reqno,dvipsnames]{amsart}
  \usepackage{latexsym} 
  \usepackage{mathtools,thmtools}
  \usepackage[all]{xy}
  \usepackage{amsfonts,amssymb,amsmath,amsthm,mathrsfs}
  \usepackage{pifont}  
  \usepackage{enumerate}
  \usepackage{dcolumn}
	\usepackage[inline]{enumitem}
	\usepackage{etoolbox}

  \usepackage{hyperref} 
\newcolumntype{2}{D{.}{}{2.0}}
  \xyoption{2cell}
\usepackage[capitalise,noabbrev]{cleveref} 	
\makeatletter
\patchcmd{\@setaddresses}{\indent}{\noindent}{}{}
\patchcmd{\@setaddresses}{\indent}{\noindent}{}{}
\patchcmd{\@setaddresses}{\indent}{\noindent}{}{}
\patchcmd{\@setaddresses}{\indent}{\noindent}{}{}
\makeatother

 \usepackage{tikz}
\usetikzlibrary{arrows}


   \def\<{{\langle}} 
  \def\>{{\rangle}}

  \def\note#1{{}}

  \def\note#1{}

  \def\beq{\begin{equation}} 
  \def\eeq{\end{equation}}

  \def\id{\mathrm{id}} 
   
  \newcommand{\Ima}{\mathrm{Im}}


 \DeclareMathOperator{\twoheaddownarrow}{\raisebox{10pt}{\rotatebox{-90}{$\twoheadrightarrow$}}}

  \newcounter{zlist} 
  \newenvironment{zlist}{\begin{list}{(\arabic{zlist})}{ 
  \usecounter{zlist}\leftmargin2.5em\labelwidth2em\labelsep0.5em 
  \topsep0.6ex
  \parsep0.3ex plus0.2ex minus0.1ex}}{\end{list}}

  \newcounter{blist}

  \newcounter{rlist}

\def\stac#1{\raise-.2cm\hbox{$\stackrel{\displaystyle\otimes}{\scriptscriptstyle{#1}}$}}

\def\cten#1{\raise-.2cm\hbox{$\stackrel{\displaystyle\reallywidehat{\otimes}}
{\scriptscriptstyle{#1}}$}}

\textheight 23 cm
\textwidth 15.5cm
\topmargin -.4in \headheight 0.3in \headsep .5cm
\oddsidemargin .15in \evensidemargin .15in
\topskip 12pt

\frenchspacing

  \def\Label#1{\label{#1}\ifmmode\llap{[#1] }\else 
  \marginpar{\smash{\hbox{\tiny [#1]}}}\fi} 
  \def\Label{\label}

  \newtheorem{proposition}{Proposition}[section]
  \newtheorem{lemma}[proposition]{Lemma} 
  \newtheorem{corollary}[proposition]{Corollary} 
  \newtheorem{theorem}[proposition]{Theorem}

\theoremstyle{definition} 
  \declaretheorem[name=Definition,qed={$\lozenge$},sibling=proposition]{definition}

  \declaretheorem[name=Example,qed={$\bigtriangledown$},sibling=proposition]{example}

 \declaretheorem[name=Construction,qed={$\bullet$},sibling=proposition]{construction}

  \theoremstyle{remark} 
  \declaretheorem[name=Remark,qed={$\triangle$},sibling=proposition]{remark}

  \newcounter{c} 
  \renewcommand{\[}{\setcounter{c}{1}$$} 
  \newcommand{\etyk}[1]{\vspace{-7.4mm}$$\begin{equation}\Label{#1} 
  \addtocounter{c}{1}} 
  \renewcommand{\]}{\ifnum \value{c}=1 $$\else \end{equation}\fi} 
  \setcounter{tocdepth}{2} 
  
   \numberwithin{equation}{section}

\newcommand{\Ah}{\mathbf{Ah}}

\newcommand{\Ab}{\mathbf{Ab}}

\newcommand{\Prod}{\mathrm{ Prod}}

\newcommand{\Mod}{\mbox{-}\mathbf{ Mod}}
\newcommand{\HMod}{\mbox{-} \mathbf{ HMod}}

\newcommand{\heap}{{\bf Hp}}

\def\NN{{\mathbb N}}

\def\ZZ{{\mathbb Z}}

\def\GGG{{\mathscr G}}

\def\MMM{{\mathscr M}}

\newcommand{\eE}{\mathrm{E}}

\newcommand{\gG}{\mathrm{G}}
\newcommand{\hH}{\mathrm{H}}

\newcommand{\rR}{\mathrm{R}}

\newcommand{\tT}{\mathrm{T}}

\newcommand{\Cc}{\mathcal{C}}

\newcommand{\Gg}{\mathcal{G}}
\newcommand{\Hh}{\mathcal{H}}

\def\*C{{}^*\hspace*{-1pt}{\Cc}}

\def\text#1{{\rm {\rm #1}}}

\def\ol{\overline}

\def\Set{\mathbf{Set}}

 \def\k{\mathbf{k}}

 \def\1{\mathbf{1}}

      \def\tr{\mathbf{Tr}}

\def\id{\mathrm{id}}

\def\grp {\mathbf{Grp}}

\def\trs {\mathbf{Trs}}
\def\lto{\longmapsto}
\def\lra{\longrightarrow}

\def\1\mathbf{1}

\def\|#1{\overline{#1}}

\def\k{\Bbbk}

\newcommand{\op}[1]{{#1}^{\mathrm{op}}}
\newcommand{\GMod}{\mbox{-}\mathbf{Mod_\bullet}}
\def\tra#1#2#3{#1\triangleright_#2 #3}

\usepackage{scalerel,stackengine}
\stackMath
\newcommand\reallywidehat[1]{%
\savestack{\tmpbox}{\stretchto{%
  \scaleto{%
    \scalerel*[\widthof{\ensuremath{#1}}]{\kern.1pt\mathchar"0362\kern.1pt}%
    {\rule{0ex}{\textheight}}
  }{\textheight}%
}{2.4ex}}%
\stackon[-6.9pt]{#1}{\tmpbox}%
}
\parskip 1ex

\begin{document}
\baselineskip=14.4pt
\title{Heaps of modules: Categorical aspects}

\author{Simion Breaz}

\address{"Babe\c s-Bolyai" University, Faculty of Mathematics and Computer Science, Str. Mihail Kog\u alniceanu 1, 400084, Cluj-Napoca, Romania}
\email{bodo@math.ubbcluj.ro}

\author{Tomasz Brzezi\'nski}

\address{
Department of Mathematics, Swansea University, 
Fabian Way,
  Swansea SA1 8EN, U.K.\ \newline 
Faculty of Mathematics, University of Bia{\l}ystok, K.\ Cio{\l}kowskiego  1M,
15-245 Bia\-{\l}ys\-tok, Poland}

\email{T.Brzezinski@swansea.ac.uk}

\author{Bernard Rybo\l owicz}
\address{Department of Mathematics, Heriot-Watt University, Edinburgh EH14 4AS,
\newline and Maxwell Institute for Mathematical Sciences, Edinburgh (UK)}
\urladdr{https://sites.google.com/view/bernardrybolowicz/}
\email{B.Rybolowicz@hw.ac.uk}

\author{Paolo Saracco}

\address{
D\'epartement de Math\'ematique, Universit\'e Libre de Bruxelles, 
Bd du Triomphe, B-1050 Brussels, Belgium.}

\urladdr{sites.google.com/view/paolo-saracco}
\urladdr{paolo.saracco.web.ulb.be}

\email{paolo.saracco@ulb.be}

\subjclass[2010]{16S70; 16Y99; 08A99}

\keywords{Heaps of modules, affine modules, modules over rings, modules over trusses}

\begin{abstract}
Connections between heaps of modules and (affine) modules over rings are explored. This leads to explicit, often constructive, descriptions of some categorical constructions and properties that are implicit in universal algebra and algebraic theories. In particular, it is shown that the category of groups with a compatible action of a truss $T$ (also called pointed $T$-modules) is isomorphic to the category of modules over the ring $\rR(T)$ universally associated to the truss. This is widely used in the explicit description of free objects. Next, it is proven that the category of heaps of modules over $T$ is isomorphic to the category of affine modules over $\rR(T)$ and, in order to make the picture complete, that (in the unital case) these are in turn equivalent to a specific subcategory of the slice category of pointed $T$-modules over $\rR(T)$. These correspondences and properties are then used to describe explicitly various (co)limits and to compare short exact sequences in the Barr-exact category of heaps of $T$-modules with short exact sequences as defined previously.
\end{abstract}    
\date\today
\maketitle
\setcounter{tocdepth}{1}
\tableofcontents


\section{Introduction}

In the 1920s, H. Pr\" ufer \cite{Pruf24}, R. Baer \cite{Baer29} and A.\ K.\ Su\v skevi\v c \cite{Sus37} introduced a novel algebraic structure called \emph{heap}. A heap is a set with a ternary operation limited by certain conditions, called associativity and Mal’cev's identities (see \cref{ssec:heaps}). It turns out that there is a deep connection between groups and heaps: heaps correspond to free transitive actions of groups on sets, which allows us to depart from the choice of a neutral element and focus on the set as an affine version of the group itself. 

Nearly 100 years later, trusses were introduced in \cite{Brz:tru} as structures describing two different distributive laws: the well-known ring distributivity and the one coming from the recently introduced braces, which are gaining popularity due to their role in the study of the set-theoretic solutions of the Yang-Baxter equation. The brace distributive law appeared earlier in the context of  quasi-rings of radical rings; see \cite{Kro68}. It turns out that both these structures, rings and braces, can be described elegantly by switching the group structure to a heap structure. This leads to the definition of a truss, which is a set $T$ with a ternary operation $[-,-,-]$ and a binary multiplication $\cdot$ satisfying the conditions of \cref{ssec:trusses}, the crucial one being the generalisation of ring and brace distributivity:
\[
a\cdot[b,c,d]=[a\cdot b,a\cdot c,a\cdot d] \qquad \text{and} \qquad [b,c,d]\cdot a=[b\cdot a,c\cdot a,d\cdot a],
\]
for all $a,b,c,d\in T$. Thanks to this, we can jointly approach brace and ring theory.

Every truss $T$ is a congruence class of a ring $\rR(T)$, the universal extension of $T$ into a ring (see \cite{ABR}). Trusses, even though similar to rings, differ significantly as the category of trusses has no zero object. 
Having the structure of a truss, it is natural to ask: what is the theory of modules over trusses? As one can expect, a module $M$ over a truss $T$ is a truss homomorphism from the truss $T$ to the endomorphisms of the abelian heap $M$, analogously to the case of modules over rings. In the previous paper \cite{BrBrRySa}, we have shown that with every module $M$ over a truss $T$, one can associate an affine version of the module $M$, denoted by $(M,\triangleright)$ and called a heap of $T$-modules. In the case of modules over a ring, we acquire affine modules in the sense of ring theory (see e.g.\ \cite{Ostermann-Schmidt}). Surprisingly, two non-isomorphic modules over a truss can give rise to the same affine structure. Moreover, every heap of $T$-modules is an affine version of a $T$-module coming from the endomorphisms of a group instead of a heap. In the present text, such $T$-modules will be called pointed $T$-modules to underline the fact that $T$ acts on a group, that is, a heap with a chosen point. 
Modules and heaps of modules over commutative trusses are modes in the sense of \cite{RomanowskaSmith} or, in other words, idempotent entropic algebras. In that case, some results in this paper can also be seen as non-commutative extensions of their modal analogues.

In this paper we extend the exploration of links between  $T$-modules, modules over rings and pointed $T$-modules from \cite{BrBrRySa} and present a fundamental new observation that every affine structure over a truss $T$ reduces to the affine structure over a particular, universally constructed ring $\rR(T)$. The paper is  arranged  as follows.

\cref{sec:pre} contains preliminaries in which we recall all the necessary definitions and facts on heaps, trusses, modules over trusses and heaps of modules. 

In \cref{Tam}, we show that categories of pointed $T$-modules and $\rR(T)$-modules are isomorphic (\cref{thm:dummyP}). Due to this, we can construct all limits and colimits of pointed $T$-modules from $\rR(T)$-modules  and {\it vice versa}. For example, in \cref{prop:freeTgroup} we use this  isomorphism of categories to describe the free functor for pointed $T$-modules. We conclude this section with \cref{thm:TheapRTaff} which states that the categories of heaps of $T$-modules and affine $\rR(T)$-modules are isomorphic.

\cref{HOM:CA} is devoted to categorical aspects of heaps of $T$-modules. First, we note that all limits and colimits exist as heaps of $T$-modules, heaps, and $T$-modules are varieties of algebras in the sense of universal algebra. Then we briefly describe equalizers, products and pullback of heaps of $T$-modules. After that, we proceed to colimits. We present explicit constructions of quotients (\cref{rem:quotients}), coequalizers (\cref{prop:coeq} and \cref{con:coeq}), coproducts (\cref{con:coprod}) and pushouts (\cref{ex:push}) of heaps of $T$-modules or, in view of \cref{thm:TheapRTaff}, affine $\rR(T)$-modules. We conclude the part on colimits by presenting \cref{thm:slices}, which states that the category of heaps of $T$-modules is equivalent to the slice category of pointed $T$-modules projecting onto $\rR(T)$.

In the last part, \cref{sec:five}, we propose the definition of exactness of short sequences for heaps. In \cref{Prop:exofheap} we show that,
for every exact sequence of non-empty heaps, there exists a choice of elements such that by fixing those elements in the heaps of $T$-modules, we will acquire an exact sequence of modules over the ring $\rR(T)$. Then in \cref{ssec:Barr}, by taking advantage of the fact that the category of heaps of $T$-modules is exact, we define exact sequences in the sense of Barr and we conclude the paper with \cref{thm:exex}, relating every Barr exact sequence of heaps of $T$-modules to a particular short exact sequence of heaps of $T$-modules and conversely.

\section{Preliminaries}\label{sec:pre}

We begin by collecting the basics of heaps and trusses which will be necessary in the sections to come.


\subsection{Heaps and their morphisms}\label{ssec:heaps}

A {\em heap} is a set $H$ with a ternary operation 
\[[-,-,-]:H\times H\times H\to H\] 
such that for all $a,b,c,d,e\in H$ the following axioms hold:
\begin{enumerate}[label=(H\arabic*)]
\item $[a,b,[c,d,e]] =[[a,b,c],d,e]$ \quad (Associativity),
\item $[a,b,b]=[b,b,a]=a$ \qquad\qquad (Mal'cev identities).
\end{enumerate}
Moreover, if $[a,b,c]=[c,b,a]$, for all $a,b,c\in H$, then $H$ is called an {\em abelian heap}.
A \emph{sub-heap} of a heap $H$ is a subset $S$ closed under the ternary operation. 

A {\em morphism of heaps} is a function $f \colon H\to H'$, between heaps which is compatible with the ternary operations, i.e., for all $a,b,c\in H$, $f([a,b,c])=[f(a),f(b),f(c)]$. We denote by $\heap$ the category of heaps and their morphisms and by $\Ah$ the full subcategory of abelian heaps and their morphisms.

\begin{remark}[{\cite[\S2.4]{Brz:par}}]\label{rem:subheaprel}
    Let $S \subseteq H$ be a sub-heap of an abelian heap $H$. Denote by $\sim_S ~ \subseteq H \times H$ the equivalence relation:
    \[a \sim_S b \qquad \iff \qquad [a,b,s] \in S, \quad \forall\, s\in S,\]
    called the {\em sub-heap relation}. It is a congruence over $H$: if $a_i \sim_S b_i$ for $i=1,2,3$, then $\left[a_1,a_2,a_3\right] \sim_S \left[b_1,b_2,b_3\right]$. Therefore, the quotient $H/S \coloneqq H/{\sim_S}$ is an abelian heap and the canonical projection $H \to H/S$ is a morphism of heaps. Moreover, any congruence over an abelian heap is a sub-heap relation.
\end{remark}

    A family of morphisms that plays a cardinal role in the theory of heaps consists of  \emph{translation automorphisms}, defined for all $a,b\in H$ by the formula
    \begin{equation}\label{eq:trans}
    \tau_a^b\colon H\lra H, \qquad x\lto [x,a,b].
    \end{equation}
    The set of all translation automorphisms together with the identity of $H$ is denoted by $\tr(H)$. It is a subgroup of the automorphism group of $H$, which is called the \emph{translation group} of $H$. In view of \eqref{eq:trans}, for any morphism of heaps $f:H\to H'$, the map
    \begin{equation}\label{trans.funct}
    \tr(f)\colon \tr(H)\lra \tr(H'), \qquad \tau_a^b\lto \tau_{f(a)}^{f(b)},
    \end{equation}
    is a homomorphism of groups. This gives a functor $\tr\colon \heap\lra \grp$ that restricts to a functor $\Ah\lra\Ab$, that we denote by $\tr$ again (see \cite[\S2.1]{BrBrRySa}).

\subsection{Heaps and groups}

With every group $(G,\cdot,e)$ we can associate a heap $\hH(G) = (G,[-,-,-])$ where $[x,y,z] = xy^{-1}z$ for all $x,y,z\in G$; every morphism of groups is automatically a morphism of heaps hence this assignment is functorial, leading to the functor $\hH \colon \grp \to \heap$. In the opposite direction, with every non-empty heap $H$ and $e\in H$, we can associate a group $\gG(H;e)=(H,[-,e,-])$, where $[-,e,-]$ is a binary operation acquired by fixing the middle variable in the ternary operation. The group $\gG(H;e)$ is called the {\em  $e$-retract of the heap $H$}. 
Every morphism of non-empty heaps $f \colon H \to H'$ yields a morphism of groups
\[\gG(f) \colon \gG(H;e_H) \lra \gG(H';e_{H'}), \qquad \gG(f) \coloneqq \tau_{f(e_H)}^{e_{H'}} \circ f,\]
Notice that $\hH(\gG(H;e_H)) = H$ for every non-empty heap $H$ and that $\gG(\hH(G);e_G) \cong G$ for every group $G$.

\begin{remark}\label{rem-heap-morphism}
Let $f:H\to H'$ be a morphism of heaps. For every group operation associated with $H$ there exists a group operation associated with $H'$ such that $f$ is a morphism of groups with respect to these operations.
\end{remark}

\begin{remark}[{\cite[\S2.1]{BrBrRySa}}]
    Let $H$ be a non-empty heap and $e \in H$. The assignment
    \[\gG(H;e) \lra \op{\tr(H)}, \qquad x \lto \tau_e^x\]
    is an isomorphism of groups with inverse
    \begin{equation*}
    \op{\tr(H)} \lra \gG(H;e), \qquad \tau_x^y \lto \tau_x^y(e).\qedhere
    \end{equation*}
\end{remark}

For the convenience of the interested reader, we record a third realisation of the group associated with a non-empty heap (see, e.g., \cite[\S6.3]{RomanowskaSmith}). On the Cartesian product $H\times H$, define the equivalence relation
\[(x,y) \sim (x',y') \quad \iff \quad y' = [x',x,y].\] 
Since $H\times H$ is a semigroup with product $(x,y)(x',y') \coloneqq (x, [y,x',y'])$ and $\sim$ is a congruence on $H \times H$, the quotient $H\times H / {\sim}$ is a semigroup. In fact, it is a group with identity the class of $(x,x)$. The inverse of the class of $(x,y)$ is the class of $(y,x)$. The assignment
\[\op{\tr(H)} \lra H \times H/{\sim}, \qquad \tau_x^y \lto \overline{(x,y)}\]
is an isomorphism of groups with inverse
\begin{equation*}
H \times H/{\sim} ~\lra \op{\tr(H)}, \qquad \overline{(x,y)} \lto \tau_x^y. \qedhere
\end{equation*}

\subsection{Trusses and their modules}\label{ssec:trusses}

A {\em truss} is an abelian heap $T$ together with an associative binary operation $\cdot: T\times T\to T,\ (s,t)\mapsto st,$ such that for all $s,s',s'',t\in T$,
\begin{enumerate}[label=(T\arabic*)]
\item $t[s,s',s'']=[ts,ts',ts'']$ \qquad (left distributivity),
\item $[s,s',s'']t=[st,s't,s''t]$ \qquad (right distributivity).
\end{enumerate}
A truss $T$ is said to be \emph{unital} if there exists a distinguished element $1_T \in T$ such that $t1_T = t= 1_Tt$ for all $t \in T$.

A {\em morphism of trusses} is a heap morphism $f \colon T\to T'$ between trusses which preserves the binary operation, i.e., for all $s,t\in T$, $f(st)=f(s)f(t)$. If $T$ and $T'$ are unital, then we often require $f$ to preserve the units as well, i.e., $f(1_T) = 1_{T'}$.
Trusses and their morphisms form a category that we denote by $\bf Trs$.

\begin{example}
    Let $R$ be a ring. The abelian heap $\hH(R)$ associated with the underlying abelian group structure, with the same multiplication, is a truss, that we denote by $\tT(R)$. If $R$ is unital, then $\tT(R)$ is unital as well. This construction induces a functor $\tT\colon \textbf{Rings} \to \textbf{Trs}$ from the category of rings to the category of trusses that also restricts to their unital counterparts.
\end{example}

A {\em left module over a truss } $T$ or a {\em left $T$-module} is an abelian heap $M$ together with an action $\cdot :T\times M\to M$ such that for all $t,t',t''\in T$ and $m,n,e\in M$,
\begin{enumerate}[label=(M\arabic*)]
\item $t\cdot (t'\cdot m)= (tt')\cdot m$,
\item $[t,t',t'']\cdot m=[t\cdot m,t'\cdot m,t''\cdot m]$,
\item $t\cdot[m,n,e]=[t\cdot m,t\cdot n,t\cdot e]$.
\end{enumerate}
An element $e$ in a left $T$-module $M$ is called an \emph{absorber} if $t \cdot e =e $ for all $t \in T$.
A module $M$ over a unital truss $T$ is said to be {\em unital} if $1_T \cdot m = m$ for all $m \in M$. The category of left $T$-modules and their morphisms will be denoted by $T\Mod$. 

Analogously, one defines ({\em unital}) {\em right $T$-modules}.  A {\em $(T,S)$-bimodule} is a left $T$-module and right $S$-module $M$ such that $(t\cdot m)\cdot s=t\cdot (m\cdot s)$ for all $t\in T$, $s\in S$ and $m\in M$.

As it happens for rings and modules, many facts about modules over trusses have a straightforward unital analogue. In what follows we will deal with not necessarily unital modules over not necessarily unital trusses, unless differently specified, leaving to the interested reader the task of specifying the results to the unital setting.

\begin{definition}\label{def:induced} 
Let $M$ be a non-empty left $T$-module. For every $e \in M$, the action $\triangleright_e\colon  T \times M \lra M$, given by
\[t\ \triangleright_e\  m \coloneqq [t\cdot m,t\cdot e,e], \qquad \textrm{for all }m\in M, t \in T,\]
is called the \emph{$e$-induced action} or the \emph{$e$-induced module structure} on $M$. We say that a subset $N\subseteq M$ is {\em an induced submodule} of $M$ if $N$ is a non-empty sub-heap of $M$ and $t\ \triangleright_e\  n \in N$ for all $t\in T$ and $n,e\in N$.
\end{definition}

\begin{theorem}[{\cite[Proposition 4.32]{Brz:par}}]
Let $M$ be a $T$-module and $N \subseteq M$ a non-empty sub-heap. The quotient $M/N$ is a $T$-module with the canonical map $\pi:M\to M/N$ being an epimorphism of $T$-modules if and only if $N$ is an induced submodule of $M$.
\end{theorem}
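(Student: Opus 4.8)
The plan is to rely on \cref{rem:subheaprel}, which already supplies the quotient abelian heap $M/N = M/{\sim_N}$ and the heap morphism $\pi\colon M\to M/N$; what is left is to decide when the $T$-action on $M$ descends along $\pi$. Since $\pi$ is a surjective morphism of heaps, there is at most one $T$-action on $M/N$ for which $\pi$ is a morphism of $T$-modules, namely $t\cdot\pi(m):=\pi(t\cdot m)$; such an action exists precisely when $\sim_N$ is compatible with the $T$-action (i.e.\ $a\sim_N b$ implies $t\cdot a\sim_N t\cdot b$ for all $t\in T$), and once it exists the axioms (M1)--(M3) for $M/N$ are inherited from those of $M$ by surjectivity of $\pi$, while $\pi$ is automatically an epimorphism. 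So the whole statement reduces to the equivalence: $\sim_N$ is compatible with the $T$-action if and only if $N$ is an induced submodule. I would also isolate two elementary facts about the sub-heap relation of a sub-heap $N$ of an abelian heap: (i) $N$ is saturated, since $x\sim_N n$ with $n\in N$ forces $x=[x,n,n]\in N$; and (ii) after fixing any $n_0\in N$, one has $a\sim_N b$ iff $[a,b,n_0]\in N$ --- the forward direction is the definition, and the converse follows from $[a,b,s]=[[a,b,n_0],n_0,s]\in N$ for every $s\in N$ by associativity and closure of $N$.

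For the \emph{only if} direction, assume $\pi$ is a morphism of $T$-modules and take $t\in T$ and $n,e\in N$. Then $n\sim_N e$, so $\pi(t\cdot n)=t\cdot\pi(n)=t\cdot\pi(e)=\pi(t\cdot e)$, and hence
\[
\pi(t\triangleright_e n)=\pi([t\cdot n,t\cdot e,e])=[\pi(t\cdot e),\pi(t\cdot e),\pi(e)]=\pi(e),
\]
so $t\triangleright_e n\sim_N e$ and therefore $t\triangleright_e n\in N$ by fact (i). This is exactly the closure condition that makes $N$ an induced submodule.

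For the \emph{if} direction, assume $N$ is an induced submodule, fix $n_0\in N$, and suppose $a\sim_N b$. Put $c:=[a,b,n_0]$, which lies in $N$ by fact (ii), and note the Mal'cev identity $a=[c,n_0,b]$. Then $[t\cdot a,t\cdot b,n_0]=[t\cdot[c,n_0,b],t\cdot b,n_0]$, which by axiom (M3) and the heap axioms equals $[t\cdot c,t\cdot n_0,[t\cdot b,t\cdot b,n_0]]=[t\cdot c,t\cdot n_0,n_0]=t\triangleright_{n_0}c$; the latter lies in $N$ because $N$ is an induced submodule and $c,n_0\in N$. By fact (ii) again, $t\cdot a\sim_N t\cdot b$, so the action descends. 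The argument is a chain of heap manipulations with no genuine obstacle; the one step that must be spotted is the identity $[t\cdot a,t\cdot b,n_0]=t\triangleright_{n_0}[a,b,n_0]$, which simultaneously turns the compatibility question into the defining closure property of an induced submodule and, read through $t\triangleright_e n$, makes the \emph{only if} direction immediate.
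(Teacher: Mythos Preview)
Your proof is correct. Note, however, that the paper does not supply its own proof of this statement: it is quoted verbatim as \cite[Proposition 4.32]{Brz:par} and left unproved, so there is no in-paper argument to compare against. Your approach---reducing the question to compatibility of $\sim_N$ with the $T$-action and then recognising $[t\cdot a,t\cdot b,n_0]=t\triangleright_{n_0}[a,b,n_0]$ as the bridge between that compatibility and the induced-submodule closure condition---is the natural one and matches the spirit of the original reference.
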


A distinguished class of $T$-modules is the one consisting of non-empty $T$-modules together with a fixed absorber. That is, abelian groups with a left $T$-action.

\begin{definition}[{\cite[Definition 2.30]{BrBrRySa}}]
Let $T$ be a truss. A {\it pointed module over $T$}, or {\it pointed $T$-module}, is an abelian group $G$ together with an action \mbox{$\cdot\colon T\times G\to G$} of the multiplicative semigroup of $T$ on $G$ such that for all $t,t',t''\in T$ and $g,h\in G$, 
\begin{equation}\label{eq:TGrp}
[t,t',t'']\cdot g = t\cdot g - t'\cdot g + t''\cdot g \qquad  \text{and} \qquad t\cdot (g+h) = t\cdot g+t\cdot h.
\end{equation}
If $T$ is unital and $1_T \cdot g = g$ for all $g \in G$, then $G$ is called a {\em unital} pointed $T$-module.

A morphism of pointed $T$-modules is by definition a group homomorphism $f \colon  G \to G'$ such that 
$f(t \cdot g) = t \cdot f(g)$
for all $g \in G$ and $t \in T$. For the sake of brevity, we may often call them \emph{$T$-linear group homomorphisms}.
All pointed $T$-modules together with $T$-linear group homomorphisms form the category $T\GMod$.
\end{definition}

\begin{remark}\label{rem:trussaction}
    Let $G$ be a pointed $T$-module and consider $\lambda\colon T \to \Set(G,G)$, $t \mapsto \lambda_t,$ where $\lambda_t(g) \coloneqq t \cdot g$ for all $g \in G$, $t \in T$. The right-hand side of \eqref{eq:TGrp} entails that $\lambda_t \in \Ab(G,G)$ for all $t \in T$. The left-hand side of \eqref{eq:TGrp} implies that $\lambda$ is a morphism of heaps, where the heap structure on $\Ab(G,G)$ is the one induced by its own abelian group structure (i.e.\ the point-wise one). The fact that $\cdot$ is a semigroup action means that $\lambda$ is also multiplicative and so that 
    \[\lambda \colon T \lra \tT\big(\Ab(G,G)\big), \qquad t \lto \lambda_t,\]
    is a morphism of trusses.
    Conversely, if $G$ is an abelian group, then every morphism of trusses  $\lambda \colon T \lra \tT\big(\Ab(G,G)\big)$, $t \lto \lambda_t,$ induces a pointed $T$-module structure on $G$, where $t\cdot g \coloneqq \lambda_t(g)$.
\end{remark}

\begin{example}
Consider the truss $Oint \coloneqq \big(\{2n+1\ |\ n\in \mathbb{Z}\},[-,-,-],\cdot\big)$ consisting of odd integers together with multiplication of integers and ternary bracket $[a,b,c] \coloneqq a-b+c$ for $a,b,c\in Oint$. Then, consider the group $(2\mathbb{Z},+)$ of even integers with usual addition. We have that $(2\mathbb{Z},+)$ is a pointed $Oint$-module with usual multiplication of integers as action, that is, $(2m+1)\cdot 2n \coloneqq 4mn+2n$, for all $m,n\in \mathbb{Z}$. 
\end{example}

\subsection{Trusses and rings}\label{ssec:TrandRing}
It is a well-known fact in truss theory (see \cite[\S5]{ABR} and \cite[Lemma 3.13]{BrzRyb:mod}) that there exists a ring $\rR(T)$, which is unital if $T$ is unital, satisfying the following universal property: there is a morphism of trusses $\iota_T \colon T \to \tT\rR(T)$ such that for any ring $R$ and any morphism of trusses $\varphi \colon T \to \tT(R)$ there exists a unique ring homomorphism $\hat{\varphi} \colon \rR(T) \to R$ making the following diagram commute:
\[
\xymatrix{
T \ar[rr]^{\iota_T} \ar[dr]_-{\varphi} && \tT\rR(T) \ar@{.>}[dl]^-{\exists!\tT(\hat{\varphi})} \\
& \tT(R) & 
}
\]
That is to say, the pair $(\rR(T), \iota_T)$ is a universal arrow in the sense of \cite[Chapter III, \S1, Definition]{Mac:lane} or, equivalently, the functor $\rR\colon \textbf{Trs} \to \textbf{Rings}$ is left adjoint to the functor $\tT\colon \textbf{Rings} \to \textbf{Trs}$ from \S\ref{ssec:trusses}. Moreover, if $T$ and $R$ are unital and $\varphi(1_T) = 1_R$, then $\hat{\varphi}$ preserves the units, too.

\begin{remark}[{\cite[\S5]{ABR}}]
\label{rem:rt}
    If $T$ is the empty truss, then $\rR(T) = 0$, the zero ring. Let $T$ be a non-empty truss and let $o \in T$. The ring $\rR(T)$ can be realised as the abelian group $\gG(T;o) \oplus \ZZ$ with multiplication
    \[(t,m)(s,n) = \big(ts + (n-1)to + (m-1)os + (m-1)(n-1)o^2,mn\big)\]
    and $\iota_T \colon T \to \tT\rR(T), t \mapsto (t,1)$ (here and elsewhere we simply write $+$ for the addition in $\gG(T;o)$).
    The truss $T$ is unital with unit $1_T$ if and only if the associated ring $\rR(T)$ is unital with unit $(1_T,1)$.
    
    If the truss $T$ is unital, we can choose $o=1_T$ and then the multiplication in the ring $\rR(T) = \gG(T;1_T) \oplus \ZZ$ takes the nicer form
    \begin{equation*}
    \begin{aligned}
    (t,m)(s,n) & = \big(ts + (n-1)t + (m-1)s + (m-1)(n-1)1_T,mn\big) \\
    & = \big(ts + (n-1)t + (m-1)s,mn\big)
    \end{aligned}
    \end{equation*}
    for all $t,s \in T$, $m,n \in \ZZ$. Observe that, in the latter situation, $\gG(T;1_T)$ is a ring without unit with respect to the multiplication
    \[t\cdot s \coloneqq ts - t - s\]
    for all $s,t \in T$ and $\rR(T)$ is its Dorroh extension.
\end{remark}

Readers accustomed to work with unital rings are familiar with the fact that with any not necessarily unital ring $R$, one may always associate a unital ring $R_u$ called its \emph{Dorroh extension}, that we just mentioned (see \cite{Dorroh}). This $R_u$ can be characterized as the universal unital ring associated with $R$ in the following sense: there is a canonical ring homomorphism $\jmath_R \colon R \to R_u$ such that for any unital ring $R'$ and any morphism  of rings $R \to R'$ there exists a unique morphism of unital rings $R_u \to R'$ making the obvious diagram commutative. Furthermore, the category of unital modules over $R_u$ is isomorphic (via the identity functor) to the category of $R$-modules.

An analogous construction exists for trusses (see \cite[\S2.5]{BrzRyb:fun}): for any not necessarily unital truss $T$, there exists a unital truss $T_u$ and a truss homomorphism $\jmath_T \colon T \to T_u$ which is universal in the sense that for any other unital truss $T'$ and any truss homomorphism $f \colon T \to T'$, there exists a unique morphism of unital trusses $\tilde f \colon T_u \to T'$ such that $\tilde f\circ \jmath_T = f$.

The next result shows that taking Dorroh extensions commutes with taking the ring associated to a truss.

\begin{proposition}\label{prop:Dorroh}
Let $T$ be a truss. Then $\rR(T)_u \cong \rR(T_u)$ as unital rings.
\end{proposition}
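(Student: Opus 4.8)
The statement asks to identify the Dorroh extension $\rR(T)_u$ of the ring $\rR(T)$ with the ring $\rR(T_u)$ associated to the Dorroh-style extension $T_u$ of the truss $T$. The natural strategy is to verify that $\rR(T_u)$ satisfies the universal property that characterises $\rR(T)_u$; equivalently, one can chase the adjunctions. Let me spell out the adjunction-chasing approach, which I expect to be the cleanest. First I would recall that $\rR \colon \textbf{Trs} \to \textbf{Rings}$ is left adjoint to $\tT \colon \textbf{Rings} \to \textbf{Trs}$, that the Dorroh functor $(-)_u \colon \textbf{Rings} \to \textbf{Rings}_1$ (to unital rings and unital ring maps) is left adjoint to the forgetful functor $U \colon \textbf{Rings}_1 \to \textbf{Rings}$, and that $(-)_u \colon \textbf{Trs} \to \textbf{Trs}_1$ is left adjoint to the forgetful functor $V\colon \textbf{Trs}_1 \to \textbf{Trs}$, all as recalled in the excerpt.

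The heart of the matter is the square of functors relating these four categories: $\rR$ restricts to a functor $\rR_1 \colon \textbf{Trs}_1 \to \textbf{Rings}_1$ on unital objects (as noted in \S\ref{ssec:TrandRing}, $\rR(T)$ is unital when $T$ is, and unital truss maps go to unital ring maps), and $\tT$ restricts to $\tT_1 \colon \textbf{Rings}_1 \to \textbf{Trs}_1$; moreover the forgetful functors clearly satisfy $V \circ \tT_1 = \tT \circ U$. By uniqueness of adjoints this forces the ``mate'' identity on the left adjoints: $\rR \circ V \dashv$ is left adjoint data that must agree with $(-)_u^{\textbf{Rings}} \circ \rR$ — more precisely, since $\rR_1 \dashv \tT_1$, $(-)_u^{\textbf{Trs}} \dashv V$, $(-)_u^{\textbf{Rings}} \dashv U$, and $V\tT_1 = \tT U$, taking left adjoints of both sides of this last equality gives a natural isomorphism $\rR\, (-)_u^{\textbf{Trs}} \cong (-)_u^{\textbf{Rings}}\, \rR_1$ of functors $\textbf{Trs} \to \textbf{Rings}_1$ (here I am using $\rR_1 \circ \text{(inclusion sees $T$ after unitalization)}$; one must be slightly careful that $\rR$ applied to the non-unital truss $T$ is being compared with $\rR_1$ applied to $T_u$). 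Evaluating the resulting isomorphism at $T$ yields exactly $\rR(T)_u \cong \rR(T_u)$ as unital rings, and functoriality in $T$ comes for free.

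Alternatively — and this is probably worth including as it is more hands-on and reassures the reader — I would give the direct verification: exhibit the canonical unital ring homomorphism $\rR(\jmath_T) \colon \rR(T) \to \rR(T_u)$ (image of the universal truss map $\jmath_T \colon T \to T_u$ under $\rR$, composed appropriately), note that $\rR(T_u)$ is unital, and check that the pair $\big(\rR(T_u), \jmath_{\rR(T)}\text{-analogue}\big)$ enjoys the universal property defining the Dorroh extension of $\rR(T)$. Concretely: given any unital ring $R'$ and a ring map $g \colon \rR(T) \to R'$, precompose with $\iota_T$ to get a truss map $T \to \tT(R')$; since $\tT(R')$ is a unital truss this extends uniquely along $\jmath_T$ to a unital truss map $T_u \to \tT(R')$, i.e. $\tT_1$ of something; by the universal property of $\rR(T_u)$ this yields a unique unital ring map $\rR(T_u) \to R'$, and one checks the two triangles (compatibility with $g$ on $\rR(T)$, and uniqueness) by invoking the uniqueness clauses in the three universal properties in turn. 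Both arguments can also be cross-checked against the explicit model in \cref{rem:rt}: when $T$ is unital one may take $o = 1_T$, and the formula there presents $\rR(T)$ as the Dorroh extension of the non-unital ring $\gG(T;1_T)$, so for general $T$ one gets $\rR(T_u) = \gG(T_u;1_{T_u}) \oplus \ZZ$, and the task reduces to matching $\gG(T_u; 1_{T_u})$ with the non-unital ring underlying $\rR(T)$ plus bookkeeping of which Dorroh $\ZZ$-summand survives.

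\textbf{Main obstacle.} The genuine subtlety is not any single computation but the careful handling of \emph{which} categories and \emph{which} morphisms are in play: the Dorroh extension is only a left adjoint when the target is the category $\textbf{Rings}_1$ of unital rings with \emph{unit-preserving} homomorphisms (and similarly $\textbf{Trs}_1$), whereas $\rR$ and $\tT$ are naturally defined on the non-unital categories; so one must track base points and units consistently and make sure the two universal properties being matched use compatible notions of morphism. Once that bookkeeping is pinned down, the abstract-nonsense argument (mates of an identity of right adjoints) delivers the isomorphism immediately, and the explicit check via \cref{rem:rt} serves as a sanity verification. I would therefore lead with the adjunction argument, state the naturality, and close with a one-line remark pointing to the explicit description in \cref{rem:rt} for the concrete form of the isomorphism.
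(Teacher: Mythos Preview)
Your proposal is correct and, in its second (``hands-on'') alternative, follows essentially the same route as the paper: the paper builds the two comparison maps $\hat\varphi\colon \rR(T)_u \to \rR(T_u)$ and $\hat\psi\colon \rR(T_u) \to \rR(T)_u$ by chaining the universal properties of $\jmath_T$, $\iota_T$, $\iota_{T_u}$ and $\jmath_{\rR(T)}$ exactly as you describe, and then concludes that they are mutual inverses by the uniqueness clauses. Your leading ``mates of adjunctions'' argument is a legitimate and slightly more conceptual repackaging of the same idea (the paper does not state it this way); note only the small slip in your displayed isomorphism, where the correct identity reads $\rR_1\circ(-)_u^{\textbf{Trs}} \cong (-)_u^{\textbf{Rings}}\circ\rR$ rather than with $\rR$ and $\rR_1$ swapped --- you already flag this bookkeeping in your parenthetical remark, so the argument goes through once the indices are placed consistently.
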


\begin{proof}
    This can be easily proved by taking advantage of the universal properties of these constructions. If we consider the composition
    \[
    T \xrightarrow{\jmath_T} T_u \xrightarrow{\iota_{T_u}} \tT\rR(T_u),
    \]
    then this is a morphism of trusses from $T$ to the truss associated with a ring. Hence, there exists a unique morphism of rings $\varphi \colon \rR(T) \to \rR(T_u)$ such that $\tT(\varphi) \circ \iota_T = \iota_{T_u} \circ \jmath_T$. Moreover, $\rR(T_u)$ being unital, there exists a unique morphism of unital rings $\hat \varphi \colon \rR(T)_u \to \rR(T_u)$ such that $\hat \varphi \circ \jmath_{\rR(T)} = \varphi$.

    In the opposite direction, the composition
    \[T \xrightarrow{\iota_T} \tT\rR(T) \xrightarrow{\tT(\jmath_{\rR(T)})} \tT(\rR(T)_u)\]
    is a morphism of trusses landing in a unital truss, whence there exists a unique morphism of unital trusses $\psi \colon T_u \to \tT(\rR(T)_u)$ such that $\psi \circ \jmath_T = \tT(\jmath_{\rR(T)}) \circ \iota_T$. Since, in turn, $\psi$ lands into the truss associated with a unital ring, there exists a unique morphism of unital rings $\hat \psi \colon \rR(T_u) \to \rR(T)_u$ such that $\tT(\hat \psi) \circ \iota_{T_u} = \psi$.

    By the uniqueness part of the respective universal properties, $\hat \varphi$ and $\hat \psi$ are each other inverses.
\end{proof}

\subsection{Heaps of modules}\label{ssec:HoM}

Let $T$ be a truss. A {\em heap of $T$-modules} $(M,[-,-,-],\triangleright)$ is an abelian heap $(M,[-,-,-])$ together with an operation 
\[\triangleright : T \times M \times M \to M, \qquad (t,m,n) \mapsto t\triangleright_m n,\]
such that
\begin{enumerate}[label=(HM\arabic*),ref=(HM\arabic*),leftmargin=1.5cm]
\item\label{item:HMM}
for all $m\in M$, the operation 
\[
\triangleright_m:T\times M \to M, \qquad (t,n)\mapsto t\triangleright_m n,
\]
makes $M$ a left $T$-module,
\item\label{item:HM3} the operation $\triangleright$ satisfies the {\em base change property}
\begin{equation}\label{eq:basechange}
t\triangleright_m n= [t\triangleright_e n, t\triangleright_e m,m],
\end{equation}
for all $m,n,e \in M$, $t \in T$.
\end{enumerate}
For the sake of brevity, we will often denote a heap of $T$-modules simply by $(M,\triangleright)$, leaving the heap structure $[-,-,-]$ understood.

A {\em morphism of heaps of modules} is a morphism $f : M \to N$ of heaps such that
\begin{equation}\label{eq:morph}
f(t\triangleright_m n) = t\triangleright_{f(m)}f(n),
\end{equation}
for all $m,n \in M$, $t \in T$. The category of heaps of $T$-modules and their morphisms will be denoted by $T\HMod$.

Suppose that $T$ is unital with unit $1_T$. A heap of $T$-modules $M$ is said to be \emph{isotropic} if $\tra {1_T} m n = n$ for all $m,n\in M$ (see \cite[\S3.2]{BrBrRySa}). The full subcategory of isotropic heaps of $T$-modules is denoted by $T\HMod_{\bf is}$.

\begin{remark}[{\cite[\S3.1]{BrBrRySa}}]
Among the consequences of the base change property \eqref{eq:basechange} one finds that
$\tra tmm=m$
and
$\tra tmn = [n,\tra tnm,m]$ for all $m,n\in M$, $t \in T$.
Furthermore, for all $e,f\in M$, the translation automorphism $\tau_e^f:M\to M$ 
is a morphism of heaps of modules, that is, it satisfies \eqref{eq:morph}.
\end{remark}

The following remark tells us that the notation for the heap of modules structure and that from \cref{def:induced} are consistent.

\begin{remark}\label{rem:M-HoM}
Let $(M,\cdot)$ be a $T$-module. Then, the operation $\triangleright\colon T \times M \times M \to M$ given by $t \triangleright_mn \coloneqq [t \cdot n, t \cdot m ,m ]$ for all $t \in T$, $m,n \in M$ makes $M$ into a heap of $T$-modules.
Moreover, if $(M,\triangleright)$ is a heap of $T$-modules, then the base change property \eqref{eq:basechange} means exactly that for all $e,m\in M$, the module structure $\triangleright_m$ is the $m$-induced module structure on $(M,\triangleright_e)$. 
Hence a heap of $T$-modules can be understood as a family of $T$-modules $\{(M,\triangleright_e) \mid e\in M\}$ in which each module is an induced module of any member of the family.
\end{remark}

The next example is worth mentioning as a motivation to study heaps of modules.

\begin{example}
\label{ex:ODE}
Every affine space over a field $\k$ is a heap of $\tT(\k)$-modules. In particular, the space of solutions of a linear differential equation of the form
\[
P(y) \coloneqq a_n(x)y^{(n)} + \cdots + a_0(x)y + r(x) = 0,
\]
is a heap of $\tT(\k)$-modules with respect to the structures
\[
\begin{gathered}
[f,g,h] \coloneqq f - g + h \qquad \text{and} \qquad \tra kgf \coloneqq kf - kg + g,
\end{gathered}
\]
for all $k \in \k$ and all $f,g,h$ solutions. 
\end{example}

\begin{example}\label{ex:ahTHmod}
    The heaps of modules over $T = \varnothing$ are exactly the abelian heaps, since there is one and only one morphism $\varnothing \times H \times H = \varnothing \to H$ for every abelian heap $H$. This induces, in fact, an isomorphism of categories $\Ah \cong \varnothing\HMod$.
\end{example}

Triggered by \cref{rem:M-HoM}, the following is a variation of \cite[Proposition 3.9 and Lemma 3.12]{BrBrRySa} which makes more explicit the isomorphism between the category of pointed heaps of $T$-modules (i.e. heaps of modules with a chosen point and morphisms preserving those points) and the category of pointed $T$-modules \cite[Proposition 4.5]{BrBrRySa}.

\begin{remark}\label{rem:TgrpsTHmods}
Let $T$ be a truss and let $G$ be a pointed $T$-module. If we consider the map 
\[\triangleright \colon T\times G\times G \lra G, \qquad (t,x,y) \lto t\triangleright_x y = t \cdot y - t \cdot x + x,\]
then $\Hh(G) \coloneqq (\hH(G),\triangleright)$ is a non-empty heap of $T$-modules. Furthermore, the assignment $\Hh \colon (G,\cdot) \lto \big(\hH(G),\triangleright\big)$ from the category of pointed $T$-modules to the category of non-empty heaps of $T$-modules is functorial. In the opposite direction, let $(M,\triangleright)$ be a non-empty heap of $T$-modules. Then for any chosen $e\in M$ we can consider 
\[\triangleright_e:T\times M\to M\] 
and the pair $\big(\gG(M;e),\triangleright_e\big)$ turns out to be a pointed $T$-module, which we denote by $\Gg\big((M,\triangleright);e\big)$. The assignment $\Gg \colon (M,\triangleright ) \lto \big(\gG(M;e),\triangleright_e\big)$ from the category of non-empty heaps of $T$-modules to the one of pointed $T$-modules is itself functorial, if for every morphism of heaps of $T$-modules $\varphi \colon (M,\triangleright) \to (N,\triangleright)$ and for every chosen $e \in M$ and $f \in N$ we consider
\[\Gg(\varphi) \coloneqq \tau_{\varphi(e)}^f\circ \varphi \colon \big(\gG(M;e),\triangleright_e\big) \lra \big(\gG(N;f),\triangleright_f\big).\]
Moreover, one may observe that $\Hh\Gg\big((M,\triangleright);e\big) = (M,\triangleright)$ for every non-empty heap of $T$-modules $(M,\triangleright)$ and for all $e \in M$, and $\Gg\big(\Hh(G,\cdot);e\big) = \big(\gG(\hH(G);e),\triangleright_e\big) \cong (G,\cdot)$ via the translation automorphism $\tau_e^{0_G}$, for every pointed $T$-module $(G,\cdot)$.
\end{remark}

We conclude the section with an equivalent description of heaps of modules which will play a key role at the end of \cref{HOM:CA}.

Recall that if $H,H'$ are abelian heaps, then we may perform the tensor product $H \otimes H'$ of abelian heaps and this satisfies the analogue properties of the tensor product of modules. Since the base change property \eqref{eq:basechange} entails that, for fixed $t\in T$ and $n\in M$, the map $M\to M$, $m\mapsto \tra tmn$ is a heap homomorphism as well, a heap of $T$-modules can be described as an abelian heap $(M,[-,-,-])$ together with a heap homomorphism
\begin{equation}\label{eq:triangle}
\triangleright : T \otimes M \otimes M \to M, \qquad t \otimes m \otimes n \mapsto t \triangleright_mn
\end{equation}
which is $T$-associative
\begin{equation}\label{eq:Tassoc}
ts \triangleright_m n = t \triangleright_m \Big(s \triangleright_m n \Big)
\end{equation}
and satisfies the base change property
\begin{equation}\label{eq:baseChange}
t \triangleright_m n = \big[t\triangleright_en,t \triangleright_em,m\big]
\end{equation}
for all $m,n,e \in M$ and $t,s \in T$. 
If now we take advantage of the fact that $- \otimes M \colon \Ah \to \Ah$ is left adjoint to $\Ah(M,-)$ for every abelian heap $M$ (see \cite[Proposition 3.7]{BrzRyb:fun}) and that $\Ah$ is a symmetric monoidal category (see \cite[Remark 3.11]{BrzRyb:fun}), then the datum of \eqref{eq:triangle} is equivalent to the datum of a heap homomorphism
\begin{equation}\label{eq:Delta}
\Delta\colon M \lra \Ah(T,\eE(M))
\end{equation}
where $\eE(M) = \Ah(M,M)$ is the truss of endomorphisms of the abelian heap $M$. Essentially, $\Delta(m)(t) = t\triangleright_m-$ for all $m \in M$, $t \in T$. In this setting, the $T$-associativity \eqref{eq:Tassoc} is equivalent to the fact that $\Delta$ takes values in $\trs(T,\eE(M))$, i.e. $\Delta(m)$ is a morphism of trusses for every $m \in M$.

\begin{lemma}
    Given an abelian heap $M$ together with a $\triangleright$ structure as in \eqref{eq:triangle}, we have that it satisfies the base change property \eqref{eq:baseChange} if and only if $\Delta$ from \eqref{eq:Delta} satisfies
    \begin{enumerate}[label=(\alph*),leftmargin=0.8cm]
        \item\label{item:2.18a} $\Delta(e)(t)\circ c_e = c_e$, the constant map $c_e \colon M \to M, m \mapsto e$, for all $e \in M$, $t \in T$, and
        \item\label{item:2.18b} $\tau_e^f \circ \Delta(e)(t) \circ \tau_f^e = \Delta(f)(t)$ for all $e,f \in M$, $t \in T$.
    \end{enumerate}
\end{lemma}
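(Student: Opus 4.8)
The plan is to unwind the correspondence between the map $\triangleright$ of \eqref{eq:triangle} and the heap homomorphism $\Delta$ of \eqref{eq:Delta}, under which $\Delta(m)(t) = t\triangleright_m-$, and then to translate the base change property \eqref{eq:baseChange} into statements about the endomorphisms $\Delta(e)(t)\in\eE(M)$. Since $\eE(M)=\Ah(M,M)$ carries its pointwise heap structure, evaluation at an element $m\in M$ is a heap homomorphism $\ev_m\colon\eE(M)\to M$, and \eqref{eq:baseChange} is precisely the assertion that for all $e,f,m\in M$ and $t\in T$,
\[
\Delta(f)(t)(m) \;=\; \big[\,\Delta(e)(t)(m),\,\Delta(e)(t)(f),\,f\,\big].
\]
So the whole statement reduces to showing that this family of identities (for all $m$) is equivalent to the conjunction of \ref{item:2.18a} and \ref{item:2.18b}.

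**Key steps.** First I would observe that the right-hand side above, as a function of $m$, is $\tau_{\Delta(e)(t)(f)}^{f}\circ\Delta(e)(t)$, so \eqref{eq:baseChange} says $\Delta(f)(t) = \tau_{\Delta(e)(t)(f)}^{f}\circ\Delta(e)(t)$ for all $e,f\in M$, $t\in T$. Next, specializing $f=e$: since $\Delta(e)(t)$ is a module homomorphism for the structure $\triangleright_e$ with absorber $e$ — concretely $e\triangleright_e e = e$, equivalently $\Delta(e)(t)(e)=e$ — we would need to know $\Delta(e)(t)(e)=e$. This is exactly the content of \ref{item:2.18a}: the equation $\Delta(e)(t)\circ c_e = c_e$ evaluated anywhere says $\Delta(e)(t)(e)=e$. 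Conversely, once $\Delta(e)(t)(e)=e$ holds, \ref{item:2.18a} follows since $c_e$ is constant at $e$. Thus \ref{item:2.18a} is equivalent to the special case $e=f$ of the reformulated base change identity together with the known fact $t\triangleright_e e = e$. For the general case, with \ref{item:2.18a} in hand we have $\Delta(e)(t)(f) = \tau_e^{f}\big(\tau_f^e(\Delta(e)(t)(f))\big)$ trivially, but more to the point the displayed identity $\Delta(f)(t) = \tau_{\Delta(e)(t)(f)}^{f}\circ\Delta(e)(t)$ can be massaged: using the Mal'cev identities one rewrites $\big[\Delta(e)(t)(m),\Delta(e)(t)(f),f\big]$ and, after composing on the right with $\tau_f^e$ and using that $\Delta(e)(t)\circ\tau_f^e$ differs from $\tau_{\Delta(e)(t)(f)}^{\Delta(e)(t)(e)}\circ\Delta(e)(t) = \tau_{\Delta(e)(t)(f)}^{e}\circ\Delta(e)(t)$ (here \ref{item:2.18a} gives $\Delta(e)(t)(e)=e$), one arrives at \ref{item:2.18b}: $\tau_e^f\circ\Delta(e)(t)\circ\tau_f^e = \Delta(f)(t)$. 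Each of these manipulations is reversible, so conversely \ref{item:2.18b} together with \ref{item:2.18a} yields \eqref{eq:baseChange} by evaluating at $m$ and reading the computation backwards.

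**Main obstacle.** The only real care is bookkeeping the various translation automorphisms: the identity $\Delta(e)(t)\circ\tau_f^e = \tau_{\Delta(e)(t)(f)}^{\Delta(e)(t)(e)}\circ\Delta(e)(t)$ uses that $\Delta(e)(t)$ is a heap homomorphism (it conjugates translations to translations, as recorded after \eqref{trans.funct}), and one must be sure to invoke \ref{item:2.18a} at exactly the right place to collapse $\Delta(e)(t)(e)$ to $e$ — without it, \ref{item:2.18b} as stated (with $\tau_f^e$ rather than a translation to $\Delta(e)(t)(e)$) would not follow. So the proof splits naturally: show \ref{item:2.18a} $\iff$ (the $e=f$ instance of base change, using $t\triangleright_e e=e$), and then, \emph{assuming} \ref{item:2.18a}, show \ref{item:2.18b} $\iff$ (full base change). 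I expect no genuine difficulty beyond this, as every step is a short computation in an abelian heap using only (H1), (H2) and functoriality of $\tr$.
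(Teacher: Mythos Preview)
Your approach is correct and essentially the same as the paper's: both directions are direct heap computations, and your reformulation of \eqref{eq:baseChange} as $\Delta(f)(t)=\tau_{\Delta(e)(t)(f)}^{f}\circ\Delta(e)(t)$ together with the conjugation identity $\Delta(e)(t)\circ\tau_f^e=\tau_{\Delta(e)(t)(f)}^{\Delta(e)(t)(e)}\circ\Delta(e)(t)$ is exactly what the paper does element-wise. One small expository point: your derivation of \ref{item:2.18a} from base change is phrased a bit circularly (you invoke ``the known fact $t\triangleright_e e=e$'' while deriving it); the clean step is simply to set $m=n=e$ in \eqref{eq:baseChange}, giving $t\triangleright_e e=[t\triangleright_e e,t\triangleright_e e,e]=e$ by Mal'cev, which is precisely what the paper does.
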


\begin{proof}
    Let us begin by supposing that $\triangleright$ satisfies the base change property. Then
    \[\big(\Delta(e)(t)\circ c_e\big)(m) = t \triangleright_e e = e = c_e(m)\]
    and 
    \[
    \begin{aligned}
    \big(\tau_e^f \circ \Delta(e)(t) \circ \tau_f^e\big)(m) & = \big[t \triangleright_e[m,f,e], e,f\big] = \big[t \triangleright_em,t \triangleright_ef,e, e,f\big] \\
    & = t \triangleright_fm = \Delta(f)(t)(m)
    \end{aligned}
    \]
    for all $m,e,f\in M$, $t \in T$. Conversely, suppose that $\Delta$ satisfies \cref{item:2.18a} and \cref{item:2.18b}. Then
    \[
    \begin{aligned}
    \big[t\triangleright_en,t \triangleright_em,m\big] & = \big[\Delta(e)(t)(n),\Delta(e)(t)(m),m\big] \stackrel{\ref{item:2.18a}}{=} \big[\Delta(e)(t)\big(\tau_m^e(n)\big),e,m\big] \\
    & \stackrel{\ref{item:2.18b}}{=} 
    \Delta(m)(t)(n) = t \triangleright_m n
    \end{aligned}
    \]
    for all $e,m,n \in M$, $t \in T$.
\end{proof}

Summing up,

\begin{proposition}\label{prop:HtM}
    An abelian heap $M$ is a heap of $T$-modules if and only if there exists a morphism of abelian heaps 
    \[\Delta \colon M \to \trs\big(T,\eE(M)\big)\]
    such that
    \begin{enumerate}[label=(\alph*),leftmargin=0.8cm]
        \item\label{item:2.19a} $\Delta(e)(t)\circ c_e = c_e$, the constant map $c_e \colon M \to M, m \mapsto e$, for all $e \in M$, $t \in T$, and
        \item\label{item:2.19b} $\tau_e^f \circ \Delta(e)(t) \circ \tau_f^e = \Delta(f)(t)$ for all $e,f \in M$, $t \in T$.
    \end{enumerate}
\end{proposition}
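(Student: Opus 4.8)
The plan is to reduce \cref{prop:HtM} to the Lemma immediately preceding it, which already establishes the desired equivalence at the level of the ``curried'' structure map $\triangleright\colon T\otimes M\otimes M\to M$ of \eqref{eq:triangle}. The only gap between the Lemma and the Proposition is the passage between the three formulations of a heap-of-modules structure recorded in the paragraph after \cref{ex:ahTHmod}: the original axioms \ref{item:HMM}--\ref{item:HM3}; the reformulation via a heap homomorphism $\triangleright\colon T\otimes M\otimes M\to M$ satisfying $T$-associativity \eqref{eq:Tassoc} and base change \eqref{eq:baseChange}; and the adjoint reformulation via $\Delta\colon M\to\Ah(T,\eE(M))$. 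So the first step is simply to invoke that discussion: an abelian heap $M$ carries a heap-of-$T$-modules structure iff it carries a heap homomorphism $\triangleright$ as in \eqref{eq:triangle} that is $T$-associative and satisfies base change. This is exactly where the base-change property \eqref{eq:basechange} is used to guarantee that $\triangleright$ is a heap homomorphism in the first slot $t$ and in the last slot $n$, and that fixing $t,n$ gives a heap map in $m$.

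The second step is to record precisely the adjunction/closed-monoidal translation, again already sketched in the text: using that $-\otimes M\dashv\Ah(M,-)$ and that $\Ah$ is symmetric monoidal (\cite[Proposition 3.7, Remark 3.11]{BrzRyb:fun}), a heap homomorphism $T\otimes M\otimes M\to M$ corresponds bijectively to a heap homomorphism $M\to\Ah(T,\eE(M))$, $m\mapsto\big(t\mapsto(t\triangleright_m-)\big)=\Delta$; and under this correspondence the $T$-associativity \eqref{eq:Tassoc} corresponds exactly to $\Delta(m)$ being a truss morphism for each $m$, i.e.\ $\Delta$ factoring through $\trs(T,\eE(M))\subseteq\Ah(T,\eE(M))$. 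Thus $M$ is a heap of $T$-modules iff there is a heap morphism $\Delta\colon M\to\trs(T,\eE(M))$ arising this way and the associated $\triangleright$ satisfies base change \eqref{eq:baseChange}.

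The third and final step is to feed this into the preceding Lemma: the Lemma says that, for an abelian heap $M$ equipped with a $\triangleright$ as in \eqref{eq:triangle} (equivalently, with the corresponding $\Delta$), the base change property \eqref{eq:baseChange} holds if and only if $\Delta$ satisfies \ref{item:2.18a} and \ref{item:2.18b}. Combining this with the equivalences from steps one and two yields exactly the statement of \cref{prop:HtM}, with conditions \ref{item:2.19a}--\ref{item:2.19b} being verbatim \ref{item:2.18a}--\ref{item:2.18b}. Conversely, given any heap morphism $\Delta\colon M\to\trs(T,\eE(M))$ satisfying \ref{item:2.19a}--\ref{item:2.19b}, one defines $t\triangleright_m n\coloneqq\Delta(m)(t)(n)$; the truss-morphism condition gives \ref{item:HMM}, conditions \ref{item:2.19a}--\ref{item:2.19b} give \eqref{eq:baseChange} by the Lemma, and base change then upgrades $\triangleright$ to a genuine heap-of-modules structure. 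There is essentially no new computational content here; the one point requiring a little care — and the only real ``obstacle'' — is making explicit that the naturality/heap-homomorphism hypothesis on $\Delta$ is preserved under the adjunction isomorphism and corresponds to $\triangleright$ being jointly a heap map in all three variables, so that no information is lost in either direction. Everything else is a bookkeeping assembly of facts already proved.
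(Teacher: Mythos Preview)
Your proposal is correct and is exactly the paper's own approach: the proposition is stated with the preamble ``Summing up,'' and has no separate proof, precisely because it is meant as the assembly of the preceding Lemma (equivalence of base change with \ref{item:2.18a}--\ref{item:2.18b}) together with the adjunction/closed-monoidal discussion converting $\triangleright$ into $\Delta$ and $T$-associativity into the truss-morphism condition. There is nothing to add.
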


\section{Heaps of modules over a truss and affine modules over a ring}\label{Tam}

Aiming at studying the categorical aspects of the theory of pointed $T$-modules and heaps of $T$-modules, let us relate these with modules and affine modules over rings, respectively.

\subsection{Pointed modules over a truss and modules over a ring}\label{ssec:TgrpsRTmods}

Let $T$ be a truss and recall from \S\ref{ssec:TrandRing} the existence of its associated universal ring $\rR(T)$.

\begin{theorem}\label{thm:dummyP}
The category $T\GMod$ of pointed $T$-modules is isomorphic to the usual, ring-theoretic, category $\rR(T)\Mod$ of $\rR(T)$-modules. In particular, it is an abelian category. Furthermore, if $T$ is unital, then the foregoing isomorphism restricts to the categories of unital pointed $T$-modules and unital $\rR(T)$-modules as well.
\end{theorem}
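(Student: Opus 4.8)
The plan is to construct explicitly a functor in each direction and show they are mutually inverse. In one direction, given a pointed $T$-module $(G,\cdot)$, \cref{rem:trussaction} provides a morphism of trusses $\lambda \colon T \to \tT\big(\Ab(G,G)\big)$. Since $\Ab(G,G) = \End_{\ZZ}(G)$ is a ring (unital, with unit $\id_G$), the universal property of $\rR(T)$ recalled in \S\ref{ssec:TrandRing} yields a unique ring homomorphism $\hat\lambda \colon \rR(T) \to \End_{\ZZ}(G)$ with $\tT(\hat\lambda) \circ \iota_T = \lambda$. A ring homomorphism $\rR(T) \to \End_{\ZZ}(G)$ is precisely the datum of an $\rR(T)$-module structure on the abelian group $G$, so we obtain an object of $\rR(T)\Mod$; and because $\iota_T$ is the universal arrow, the assignment is clearly functorial on morphisms (a $T$-linear group homomorphism is exactly an $\End_{\ZZ}$-equivariant map, hence $\rR(T)$-linear, using that $\rR(T)$ is generated as a ring by the image of $\iota_T$ together with its unit).

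Conversely, given an $\rR(T)$-module $M$, i.e.\ a ring homomorphism $\mu \colon \rR(T) \to \End_{\ZZ}(M)$, precompose with $\iota_T$ to get a morphism of trusses $\mu \circ \iota_T \colon T \to \tT\big(\End_{\ZZ}(M)\big)$; by the converse part of \cref{rem:trussaction} this endows the abelian group $M$ with a pointed $T$-module structure, and this is again visibly functorial. The two constructions are mutually inverse: starting from $(G,\cdot)$, passing to $\hat\lambda$ and restricting along $\iota_T$ returns $\tT(\hat\lambda)\circ\iota_T = \lambda$, hence the original action; starting from $\mu$, the associated $\lambda$ is $\mu\circ\iota_T$, and then the \emph{unique} ring map extending it along $\iota_T$ must be $\mu$ itself by the uniqueness in the universal property. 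Since both functors are the identity on underlying abelian groups and on underlying maps, they form an isomorphism of categories, not merely an equivalence. That $\rR(T)\Mod$ is abelian is standard, giving the ``in particular'' claim.

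For the unital refinement, I would use the explicit description of $\rR(T)$ in \cref{rem:rt} (or the Dorroh-extension description in the unital case): when $T$ is unital, $\iota_T(1_T) = (1_T,1) = 1_{\rR(T)}$, and a pointed $T$-module $G$ is unital exactly when $\lambda_{1_T} = \id_G$, i.e.\ when $\hat\lambda(1_{\rR(T)}) = \id_G$, which is precisely the condition that the $\rR(T)$-module $G$ be unital. Since $\hat\lambda$ automatically preserves units among unital rings, the correspondence restricts to the unital subcategories on the nose, and the inverse construction respects this by the same computation run backwards.

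The only mild subtlety — and the step I would be most careful about — is the bookkeeping that a morphism of \emph{pointed} $T$-modules (a group homomorphism intertwining the $T$-action) coincides exactly with a morphism of the corresponding $\rR(T)$-modules (a group homomorphism intertwining the $\rR(T)$-action). One direction is immediate since $\iota_T(T)$ sits inside $\rR(T)$; the other requires knowing that $\rR(T)$ is generated as a ring by $\iota_T(T) \cup \{1\}$, which follows from the concrete model $\rR(T) = \gG(T;o)\oplus\ZZ$ in \cref{rem:rt} (every element is a $\ZZ$-combination of $\iota_T(t)$'s and the unit). Everything else is routine transport along the adjunction $\rR \dashv \tT$ restricted to objects whose target truss is $\tT$ of an endomorphism ring.
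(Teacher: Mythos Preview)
Your proof is correct and follows exactly the same route as the paper's: translate between pointed $T$-module structures and $\rR(T)$-module structures via the universal property of $\iota_T \colon T \to \tT\rR(T)$ applied to the endomorphism ring, then observe that the two passages are mutually inverse and act as the identity on underlying maps. One small wording fix: when $T$ is non-unital $\rR(T)$ has no unit, so ``generated by $\iota_T(T)\cup\{1\}$'' is not quite right, but what you actually need (and what the concrete model $\gG(T;o)\oplus\ZZ$ gives immediately) is the stronger fact that $\rR(T)$ is generated as an abelian group by $\iota_T(T)$ alone.
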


\begin{proof}
    A pointed $T$-module $G$ is an abelian group with a truss homomorphism $T \to \tT\big(\Ab(G,G)\big)$ (see Remark \ref{rem:trussaction}). Since $\Ab(G,G)$ is a ring, this induces a unique ring homomorphism $\rR(T) \to \Ab(G,G)$ as above, making of $G$ an $\rR(T)$-module. Conversely, given an $\rR(T)$-module $M$ with module structure given by $\mu\colon \rR(T) \to \Ab(M,M)$, the composition 
    \[T \xrightarrow{\iota_T} \tT\rR(T) \xrightarrow{\tT(\mu)} \tT(\Ab(M,M)) \]
    makes $M$ into a pointed $T$-module. These two constructions are functorial and one the inverse of the other.
\end{proof}

Therefore, in the category of pointed $T$-modules all categorical (co)limits can be constructed as in a category of modules over a ring, that is, by realising them in the category of abelian groups and then endowing them with the natural $T$-action.

\begin{remark}\label{rem:dummyP}
If a truss $T$ is non-empty and $\rR(T)$ is constructed on $\gG(T;o) \oplus \ZZ$ as in Remark~\ref{rem:rt}, then the action of $\rR(T)$ on a pointed $T$-module $G$ comes out as
$$
(t,n)\cdot g = t\cdot g + (n-1)o\cdot g,
$$
for all $t\in T$, $n\in \ZZ$ and $g\in G$.   
Conversely, if $M$ is a $\rR(T)$-module, then it is a pointed $T$-module with the action
$$
t\cdot m = (t,1)\cdot m,
$$
for all $t\in T$ and $m\in M$.
\end{remark}

\begin{remark}\label{rem:sub-pointed-generated}
Let $G$ be a pointed $T$-module. The pointed $T$-submodule $\langle X \rangle_T$ generated by a set $X \subseteq G$ is the $\rR(T)$-submodule of $G$ generated by $X$. For instance, if $X = \{x\}$, then
\[
\begin{aligned}
\langle x \rangle_T & = \big\{r \cdot x + mx \mid r \in \rR(T) ,m \in \ZZ\big\} \\
& = \big\{a_1(t_1\cdot x) + \cdots + a_n(t_n \cdot x) + mx \mid n\in\NN, t_1,\ldots,t_n \in T, a_1,\ldots,a_n,m \in \ZZ\big\}.
\end{aligned}
\]

If we fix, without loss of generality, $e \in T$, then
\[\langle x \rangle_T = \big\{t\cdot x + n(e\cdot x) + mx \mid m,n\in\ZZ, t \in T\big\},\]
since $t\cdot x + s\cdot x = [t,e,s]\cdot x + e\cdot x$ for all $s,t \in T$.
\end{remark}

\begin{proposition}\label{prop:freeTgroup}
    Let $T$ be a truss. The forgetful functor $U \colon T\GMod \to \Set$ admits a left adjoint $F_T \colon \Set \to T\GMod$ given, on objects, by sending any set $X$ to the pointed $T$-module ${\rR(T)_u}^{(X)} = \bigoplus_{x \in X}\rR(T)_u$, where $\rR(T)_u$ is the Dorroh extension of $\rR(T)$. 
\end{proposition}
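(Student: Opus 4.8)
The strategy is to reduce the claim to the corresponding fact about modules over the ring $\rR(T)$ via the isomorphism of categories $T\GMod \cong \rR(T)\Mod$ established in \cref{thm:dummyP}, and then invoke the standard construction of free modules over a (not necessarily unital) ring. First I would recall that for any not necessarily unital ring $R$, the forgetful functor $R\Mod \to \Set$ has a left adjoint sending a set $X$ to the free $R$-module on $X$; since $R$-modules coincide with unital $R_u$-modules (where $R_u$ is the Dorroh extension of $R$, as recalled in the excerpt just before \cref{prop:Dorroh}), this free module is precisely ${R_u}^{(X)} = \bigoplus_{x\in X} R_u$, with the evident $R_u$-action (hence $R$-action) and the insertion of generators $X \to {R_u}^{(X)}$ sending $x$ to the element $1_{R_u}$ in the $x$-th summand.

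**Key steps.** Applying this with $R = \rR(T)$ and transporting along the isomorphism $\rR(T)\Mod \cong T\GMod$ of \cref{thm:dummyP}, one gets that ${\rR(T)_u}^{(X)}$, viewed as a pointed $T$-module via \cref{rem:dummyP} (the $T$-action being $t\cdot m = (t,1)\cdot m = \jmath_{\rR(T)}\big(\iota_T(t)\big)\cdot m$, read summand-wise), is the free pointed $T$-module on $X$. Concretely, I would exhibit the unit $\eta_X \colon X \to U\big({\rR(T)_u}^{(X)}\big)$ and check the universal property directly: given any pointed $T$-module $G$ and any function $g \colon X \to U(G)$, the desired $T$-linear group homomorphism $\widehat g \colon {\rR(T)_u}^{(X)} \to G$ is forced on the $x$-th summand to be the unique $\rR(T)_u$-linear (equivalently $\rR(T)$-linear, equivalently $T$-linear) map $\rR(T)_u \to G$ sending $1_{\rR(T)_u} \mapsto g(x)$, which exists and is unique because $\rR(T)_u$ is the free unital $\rR(T)_u$-module of rank one, and then $\widehat g$ is assembled by the universal property of the direct sum. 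Uniqueness of $\widehat g$ and functoriality of $F_T$ are then automatic.

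**Main obstacle.** There is no deep obstacle here; the only points requiring a little care are bookkeeping ones: (i) being precise that ``modules over a truss'' in the pointed sense really do match unital $\rR(T)_u$-modules, so that the free object is built from the \emph{unital} ring $\rR(T)_u$ rather than from $\rR(T)$ itself (this is exactly the Dorroh phenomenon recalled before \cref{prop:Dorroh}, and \cref{prop:Dorroh} guarantees $\rR(T)_u \cong \rR(T_u)$, so either description is consistent); and (ii) tracking through \cref{rem:dummyP} how the $\rR(T)$-module structure on $\bigoplus_{x\in X}\rR(T)_u$ restricts to the pointed $T$-module structure, in order to describe $\eta_X$ and $\widehat g$ explicitly. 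Once these identifications are in place, the verification of the adjunction is the routine check for free modules, carried out componentwise over the direct sum.
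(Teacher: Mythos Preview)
Your proposal is correct and follows essentially the same approach as the paper: reduce to $\rR(T)\Mod$ via \cref{thm:dummyP} (noting compatibility with the forgetful functors to $\Set$), then invoke the standard fact that free modules over a non-unital ring $R$ are given by $R_u^{(X)}$ via the identification of $R$-modules with unital $R_u$-modules. The paper's proof is more terse, simply writing the free object as $\rR(T)_u \otimes_\ZZ \ZZ^{(X)} \cong {\rR(T)_u}^{(X)}$ with action restricted along $T \to \rR(T) \to \rR(T)_u$, while you spell out the unit and universal property more explicitly; your side remark about \cref{prop:Dorroh} is not needed here.
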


\begin{proof}
    Notice that the isomorphism between $T\GMod$ and $\rR(T)\Mod$ from \cref{thm:dummyP} is compatible with the forgetful functors to $\Set$. Therefore, the left adjoint to $U$ is uniquely determined by sending any $X$ to the $\rR(T)_u$-module $\rR(T)_u \otimes_\ZZ \ZZ^{(X)} \cong {\rR(T)_u}^{(X)}$ and then restricting the action along the composition $T \to \rR(T) \to \rR(T)_u$.    
\end{proof}

\begin{remark}
For unital pointed modules over a unital truss $T$, the free unital pointed $T$-module functor is given, on objects, by sending any set $X$ to the pointed $T$-module $\rR(T)^{(X)}$.
\end{remark}

\begin{corollary}\label{cor:freeTgrp*}
{\rm (a)} If $T = \varnothing$, then the free pointed $T$-module $F_T(*)$ over the singleton $\{*\}$ is the free abelian group $\ZZ$. 

{\rm (b)} If $T$ is not empty and $o \in T$, then $F_T(*)$ is the abelian group $\gG(T;o)\oplus\ZZ \oplus \ZZ$ with left $T$-action
    \[t \cdot (s,n,p) = (ts + (n-1)to + pt, n + p , 0)\] 
    for all $s,t \in T$, $n,p \in \ZZ$. 
    
{\rm (c)} If $T$ is unital, then the free object over the singleton set in the category of unital pointed $T$-modules is the abelian group $\gG(T;1_T)\oplus\ZZ$ with left $T$-action
    \[t \cdot (s,n) = (ts + (n-1)t, n)\]
    for all $s,t \in T$, $n \in \ZZ$. 
\end{corollary}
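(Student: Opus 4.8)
The plan is to leverage \cref{prop:freeTgroup}, which already identifies the free pointed $T$-module on a set $X$ with $\rR(T)_u^{(X)}$, and then to compute the three cases by unwinding the explicit description of $\rR(T)$ (and of $\rR(T)_u$) from \cref{rem:rt} and \cref{prop:Dorroh}, together with the explicit $T$-action from \cref{rem:dummyP}. For the singleton $X = \{*\}$ we have $F_T(*) \cong \rR(T)_u$ as a pointed $T$-module, with $T$ acting by left multiplication along $T \xrightarrow{\iota_T} \rR(T) \xrightarrow{\jmath_{\rR(T)}} \rR(T)_u$; so the whole proof reduces to writing $\rR(T)_u$ explicitly as an abelian group and tracing through this composite action.

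For part (a), when $T = \varnothing$ we have $\rR(T) = 0$ by \cref{rem:rt}, hence $\rR(T)_u = 0_u = \ZZ$ (the Dorroh extension of the zero ring), and there is nothing left to check since the $T$-action is vacuous. For part (c), the unital case, I would first invoke the remark immediately following \cref{prop:freeTgroup}: the free \emph{unital} pointed $T$-module on $X$ is $\rR(T)^{(X)}$, so on the singleton it is $\rR(T)$ itself. Then I substitute the unital realisation $\rR(T) = \gG(T;1_T) \oplus \ZZ$ from \cref{rem:rt} and read off the action of $t \in T$ on $(s,n)$ as left multiplication by $(t,1) = \iota_T(t)$: using the nicer unital multiplication formula $(t,1)(s,n) = (ts + (n-1)t + 0\cdot s, n) = (ts+(n-1)t, n)$, which is exactly the claimed formula.

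Part (b) is the one requiring the most bookkeeping. Here $T$ is non-empty but not assumed unital, so I must use the Dorroh extension: by \cref{prop:Dorroh}, $\rR(T)_u \cong \rR(T_u)$, but it is cleaner to compute $\rR(T)_u$ directly as $\rR(T) \oplus \ZZ$ with the standard Dorroh multiplication $(r,p)(r',p') = (rr' + p'r + pr', pp')$ and unit $(0,1)$. Writing $\rR(T) = \gG(T;o) \oplus \ZZ$ from \cref{rem:rt}, we get $\rR(T)_u \cong \gG(T;o) \oplus \ZZ \oplus \ZZ$ as an abelian group, matching the claimed underlying group. The $T$-action on $F_T(*) \cong \rR(T)_u$ is left multiplication by $\jmath_{\rR(T)}(\iota_T(t)) = \big((t,1),0\big) \in \rR(T)_u$. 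Then I compute, for an element $\big((s,n),p\big)$,
\[
\big((t,1),0\big)\big((s,n),p\big) = \Big((t,1)(s,n) + p(t,1) + 0, 0\Big) = \Big((ts + (n-1)to + (1-1)os + 0,\, n) + (pt, p),\, 0\Big),
\]
using the multiplication in $\rR(T)$ from \cref{rem:rt} (noting $(m-1) = 0$ when $m=1$). Adding in $\gG(T;o) \oplus \ZZ$ gives $\big((ts + (n-1)to + pt,\, n+p),\, 0\big)$, which under the identification $((s,n),p) \leftrightarrow (s,n,p)$ is precisely $t\cdot(s,n,p) = (ts + (n-1)to + pt, n+p, 0)$, as claimed. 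The only genuine subtlety—the step I'd flag as the main obstacle—is being careful that the copy of $\ZZ$ coming from $\gG(T;o)$'s own Dorroh-type structure inside $\rR(T)$ and the copy of $\ZZ$ coming from the outer Dorroh extension are tracked correctly, so that the "$pt$" term (linear action of the extra generator) and the "$(n-1)to$" term do not get conflated; once the two $\ZZ$'s are kept distinct the computation is a direct substitution.
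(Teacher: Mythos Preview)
Your proposal is correct and follows essentially the same approach as the paper: both compute the $T$-action on $\rR(T)_u$ by unwinding the Dorroh extension and the multiplication formula from \cref{rem:rt}, arriving at the stated formula for part (b) via exactly the calculation you perform. The only difference is that the paper, ``for the reader's convenience'', appends a direct verification that $\rR(T)_u$ with this action is free on the element $(o,0,1)$---writing down the universal morphism $\phi(t,n,p) = t\cdot g + (n-1)(o\cdot g) + pg$ and checking by hand that it is additive and $T$-linear---whereas you (reasonably) regard this as already established by \cref{prop:freeTgroup}.
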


\begin{proof}
For the reader's convenience, let us exhibit an explicit proof of the second claim. Let $T$ be a truss and let us fix $o\in T$, then $\rR(T)\cong \gG(T;o)\times \mathbb{Z}$ is a $T$-module with the following action
\begin{equation}\label{eq:RTgen}
     t\cdot (s,n)\coloneqq(t,1)(s,n)=(ts+(n-1)to,n) \textrm{ for all } {t,s\in T} \textrm{ and } {n\in \mathbb{Z}};
\end{equation}
see \cref{rem:rt}. Moreover, since $\rR(T)$ is a ring its Dorroh extension $\rR(T)_u$ is a module over $\rR(T)\cong \gG(T;o)\times \mathbb{Z}\times \mathbb{Z}$. Thus, by the restriction of the operations from \cref{rem:rt} and the usual Dorroh extension, we obtain a $T$-module action
$$
\begin{aligned}
 t\cdot(s,n,p)&\coloneqq ((t,1),0)\cdot ((s,n),p)=((t,1)(s,n)+(t,1)p,0)\\ &=(ts+(n-1)to+pt,n+p,0)),
\end{aligned}
$$
for any $t,s\in T$ and $n,p\in \ZZ$. 
This is a free object with a basis $\{(o,0,1)\}$. To see this, it is enough to observe that every element $(t,n,p)\in R(T)_u$ has a unique decomposition of the form $$
(t,n,p)= \alpha\cdot(o,0,1)+u(o\cdot (o,0,1))+v(o,0,1),
$$ 
with $\alpha\in T$, and $u,v\in \mathbb{Z}$. More precisely, $\alpha=t$, $u=n-1$ and $v=p$. Therefore, for every pointed $T$-module $M$ and every $g\in M$, there exits at most one morphism of pointed $T$-modules $\phi :R(T)_u\to M$ such that $\phi (o,0,1)=g$. This morphism is defined by the formula 

\begin{equation}\label{uni:form}
  \phi (t,n,p):=t\cdot g+(n-1)(o\cdot g)+pg, \quad  \textrm{ where } g=\phi (o,0,1).  
\end{equation}

Let us show that it is a morphism of pointed $T$-modules. First of all
    \[
    \begin{aligned}
    \phi\big((t,m,u) + (s,n,v)\big) & = 
    \phi(t+s,m+n,u+v)\\ &=
    [t,o,s]\cdot g+ (m+n-1)(o\cdot g)+(u+v)g\\ &=
    t\cdot g -o\cdot g+s\cdot g+m(o\cdot g)+(n-1)(o\cdot g)+ug+vg\\ &=\phi(t,m,u)+\phi (s,n,v),
    \end{aligned}
    \]
    whence it is a morphism of abelian groups. Furthermore, since in $M$
    \[
    \begin{aligned}
    &(nt)\cdot g = [t,o,\ldots,t,o,t]\cdot g = n(t\cdot g) - (n-1)(o \cdot g)\,, & \mbox{for}\; n \geq 0, \\
    &(nt)\cdot g = [o,t,o,\ldots,t,o]\cdot g = (-n+1)(o\cdot g) +n (t\cdot g)\,, & \mbox{for}\; n < 0,
    \end{aligned}
    \]
     for all $n,p \in \ZZ$, it follows that
    \[
    \begin{aligned}
        \phi\big(t\cdot (s,n,p)\big) & = \phi\big(ts + (n-1)to+pt,n+p,0\big) \\ & =  \big(ts + (n-1)to+pt\big)\cdot g + (n+p-1)(o\cdot g) \\
        &= ts\cdot g-o\cdot g+((n-1)to)\cdot g-o\cdot g+(pt)\cdot g+(n+p-1)(o\cdot g)\\
        &= ts\cdot g+(n-1)(to\cdot g)-(n-2)(o\cdot g)-2(o\cdot g)+p(t\cdot g)\\
        &-(p-1)(o\cdot g)+ n(o\cdot g)+(p-1)(o\cdot g)\\
        & = ts\cdot g +(n-1)(to \cdot g)+p(t\cdot g) \\
        & = t\cdot (s\cdot g + (n-1)(o \cdot g)+p\cdot g) = t \cdot \phi(s,n,p).
    \end{aligned}
    \]
    Therefore, $\phi$ is a morphism of pointed $T$-modules. The uniqueness of $\phi$ follows directly by the fact that $\phi$ is uniquely given by the image of $(o,0,1)$ in the formula \eqref{uni:form}.
\end{proof}

\begin{example}
Let us consider the truss $\mathbb{Z}_{6,3}\coloneqq(6\ZZ+3,[-,-,-],\cdot)$, where $[a,b,c]=a-b+c$, for all $a,b,c\in 6\ZZ+3$, and $+,\cdot$ are the addition and multiplication of integers, respectively. The free $\ZZ_{6,3}$-module over $\{*\}$ is the group $\gG(\ZZ_{6,3};3)\times \ZZ\times \ZZ$ together with the $\ZZ_{6,3}$-module operation, expressed in terms of the standard addition and multiplication of integers as
$$
(6z+3)\cdot (6t+3,n,p):=(36zt+18t+18zn+6n+6pz+3,n+p,0),
$$
where $z,t,n,p\in \ZZ$.
\end{example}

\subsection{Heaps of modules over a truss and affine modules over a ring}

Let $R$ be a not necessarily unital ring. Then, by adapting \cite[Definition 4.8]{BrBrRySa} to the present setting, we refer to a heap of $\tT(R)$-modules $(M,\triangleright)$ such that
$\tra {0_R}mn = m$ for all $m,n \in M$   as to an \emph{affine module} over $R$. If $R$ is unital, then we also require that $M$ is isotropic, that is, $\tra {1_R} m n = n$ for all $m,n \in M$ (in the unital setting, this definition is equivalent to that in \cite[\S1]{Ostermann-Schmidt}, see \cite[Proposition 4.9]{BrBrRySa}). A morphism of affine $R$-modules is simply a morphism of the corresponding heaps of $\tT(R)$-modules.

\begin{theorem}\label{thm:TheapRTaff}
    The category of heaps of $T$-modules is isomorphic to the category of affine $\rR(T)$-modules.
\end{theorem}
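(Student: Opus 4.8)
The plan is to construct a pair of mutually inverse functors between $T\HMod$ and the category $\mathrm{Aff}\textrm{-}\rR(T)$ of affine $\rR(T)$-modules, by transporting the datum of the $\triangleright$ structure along the universal property of $\rR(T)$. First I would use the characterization from \cref{prop:HtM}: a heap of $T$-modules is an abelian heap $M$ together with a morphism of abelian heaps $\Delta\colon M\to \trs(T,\eE(M))$ satisfying conditions \cref{item:2.19a,item:2.19b}. Since an affine $\rR(T)$-module is, by definition, a heap of $\tT(\rR(T))$-modules $(M,\triangleright)$ with the extra normalization $\tra{0_{\rR(T)}}mn=m$ (and isotropy in the unital case), the same proposition applies with $T$ replaced by $\tT(\rR(T))$, so both categories are described by such $\Delta$-type data and the task reduces to matching these data.

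The key step is the following correspondence. Given a heap of $T$-modules with structure map $\Delta\colon M\to\trs(T,\eE(M))$, fix any $e\in M$; then $\Delta(e)\colon T\to \eE(M)$ lands in $\trs(T,\eE(M))$, but we need a morphism out of a \emph{ring}. Here one observes that although $\eE(M)=\Ah(M,M)$ is only a truss, for a \emph{fixed} $e$ the sub-heap of endomorphisms fixing $e$ — equivalently $\Ab(\gG(M;e),\gG(M;e))$ under the $e$-retract — is a genuine ring, and condition \cref{item:2.19a} says precisely that $\Delta(e)(t)$ differs from an $e$-fixing endomorphism in a controlled way; composing with the translation $\tau_e^{0}$ turns $\Delta(e)$ into a truss morphism $T\to\tT(\Ab(\gG(M;e),\gG(M;e)))$. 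By the universal property of $\rR(T)$ this extends uniquely to a ring homomorphism $\rR(T)\to \Ab(\gG(M;e),\gG(M;e))$, i.e. an $\rR(T)$-module structure on $\gG(M;e)$, which via \cref{rem:M-HoM} yields a heap of $\tT(\rR(T))$-modules structure $\triangleright'$ on $M$; one checks the normalization $\tra{0_{\rR(T)}}mn=m$ holds because $0_{\rR(T)}$ acts as $0$ on the group $\gG(M;e)$. Conversely, restricting an affine $\rR(T)$-module structure along $\iota_T\colon T\to\tT\rR(T)$ produces a heap of $T$-modules. The essential consistency check — and this is the step I expect to be the main obstacle — is that the $\rR(T)$-module structure obtained on $M$ via the construction above is \emph{independent of the auxiliary choice of $e$}; this is exactly what condition \cref{item:2.19b} (the translation-conjugation compatibility $\tau_e^f\circ\Delta(e)(t)\circ\tau_f^e=\Delta(f)(t)$) is designed to guarantee, together with the base-change property rewritten as in \cref{rem:M-HoM}. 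Once well-definedness is settled, functoriality on morphisms is routine: a morphism of heaps of $T$-modules $f\colon M\to N$ satisfies \eqref{eq:morph}, hence conjugates the $\Delta$'s compatibly, hence is $\rR(T)$-linear for the induced structures, and vice versa.

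Finally I would verify the two round-trips are the identity. Starting from a heap of $T$-modules $(M,\triangleright)$, building $\triangleright'$ through $\rR(T)$ and then restricting along $\iota_T$ returns $t\triangleright'_m n = [t\cdot n, t\cdot m, m]$ computed in $\gG(M;e)$, which by the base-change property \eqref{eq:basechange} and \cref{rem:M-HoM} equals the original $t\triangleright_m n$; the computation is insensitive to $e$ by the previous paragraph. Starting from an affine $\rR(T)$-module and restricting to $T$ then re-extending uses uniqueness in the universal property of $\rR(T)$ to recover the original ring action — here the hypothesis $\tra{0_{\rR(T)}}mn=m$ is what pins down the action of $0_{\rR(T)}$, and isotropy $\tra{1_{\rR(T)}}mn=n$ pins down the action of the unit in the unital case, so no information is lost. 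Assembling these, the two functors are mutually inverse isomorphisms of categories, which is the assertion of \cref{thm:TheapRTaff}.
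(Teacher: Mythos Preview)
Your approach is essentially the same as the paper's: both proofs choose an element $e\in M$, pass from the heap of $T$-modules to a pointed $T$-module, invoke the universal property of $\rR(T)$ to obtain an $\rR(T)$-module structure, rebuild a heap of $\tT\rR(T)$-modules from it, and take restriction along $\iota_T$ as the inverse functor. The paper packages the middle step by citing \cref{thm:dummyP} and then computing the explicit formula $(t,k)\,\tilde{\triangleright}_m\, n = t\triangleright_m n +_e (k-1)\,o\triangleright_e(m-_e n)$ to see directly that the result is independent of $e$; you instead route through the $\Delta$-characterization of \cref{prop:HtM} and argue independence of $e$ via condition \ref{item:2.19b}. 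Both arguments are valid and amount to the same thing.

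One small correction: your description of condition \ref{item:2.19a} is garbled. That condition says $\Delta(e)(t)\circ c_e=c_e$, i.e.\ $\Delta(e)(t)(e)=e$, so $\Delta(e)(t)$ \emph{already} fixes $e$ and is therefore already a group endomorphism of $\gG(M;e)$; there is no need to ``compose with the translation $\tau_e^{0}$'' (indeed in $\gG(M;e)$ one has $0=e$ and $\tau_e^0=\id$). Thus $\Delta(e)$ lands directly in $\tT\big(\Ab(\gG(M;e),\gG(M;e))\big)$, and the universal property of $\rR(T)$ applies immediately. With that fix, your independence-of-$e$ argument goes through cleanly: condition \ref{item:2.19b} says $\tau_e^f$ is a $T$-equivariant group isomorphism $\gG(M;e)\to\gG(M;f)$, hence (by \cref{thm:dummyP}) $\rR(T)$-linear, and so the two induced heap-of-$\tT\rR(T)$-modules structures agree.
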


\begin{proof}
    Since the empty heap of $T$-modules corresponds to the empty affine $\rR(T)$-module, we may work with non-empty objects without loss of generality. A (non-empty) heap of $T$-modules $(M, \triangleright)$ is converted into an affine $\rR(T)$-module $(M,\tilde{\triangleright})$ by setting, for all $m,n\in M$ and $t\in T$,
    \begin{equation}\label{eq.affine.rt}
        t ~\tilde{\triangleright}_m~ n = \tra tmn \quad \& \quad 0~\tilde{\triangleright}_m~ n = m,
    \end{equation}
    and extending uniquely to the whole of $\rR(T)$. To see that this action is well-defined, one can first use a specific realisation of $\rR(T)$ (for example the one in Remark~\ref{rem:rt}, in which the zero of $\rR(T)$ is given by $(o,0)\in T\times \ZZ$ and $T$ is embedded as $(T,1)\subset T\times \ZZ$) and choose an element $e\in M$ to convert $(M, \triangleright)$ into a pointed $T$-module $(M,+_e,\triangleright_e)$. By \cref{thm:dummyP}, $(M,+_e,\triangleright_e)$ is an $\rR(T)$-module $(M,+_e,\cdot)$ with the action provided by Remark~\ref{rem:dummyP},
    $$
    (t,k)\cdot m = \tra tem +_e (k-1)\tra oem,
    $$
    for all $t\in T$, $k\in \ZZ$ and $m\in M$. To the latter module, we may assign a heap of $\tT\rR(T)$-modules by setting
    \[[m,n,p]_{+_e} \coloneqq m -_e n +_e p = [m,n,p]\]
    and $\tilde{\triangleright} \colon \tT\rR(T) \times M \times M \to M$  by
    \[
    \begin{aligned}
       (t,k) ~\tilde{\triangleright}_m~ n &\coloneqq (t,k) \cdot n -_e (t,k)\cdot m +_e m \\
       &= \big[\tra ten,\tra tem,m\big] +_e (k-1)\tra oe{(m-_en)}\\
       &= \tra tmn +_e (k-1)\tra oe{(m-_en)}, 
    \end{aligned}
    \]
    for all $t\in T$, $m,n\in M$ and $k\in \ZZ$. In particular, for elements $(t,1)$ and $(o,0)$ one obtains formulae \eqref{eq.affine.rt}, from which the above form of $~\tilde{\triangleright}$ has been uniquely derived.
 
 The formulae \eqref{eq.affine.rt} indicate that the assignment sending any heap of $T$-modules $(M,\triangleright)$ to the affine $\rR(T)$-module $(M,\tilde{\triangleright})$  induces the functor
    \[T\HMod \lra \textbf{Aff}_{\rR(T)},\]
    which acts on morphisms as the identity.
    
    In the opposite direction, any heap of $\rR(T)$-modules $(N,\triangleright)$ becomes a heap of $T$-modules $(N,\hat{\triangleright})$ by restriction of scalars along $\iota_T \colon T \to \tT\rR(T)$:
    \[t\ \hat{\triangleright}_m n \coloneqq \tra tmn\, ,\]
    for all $m,n\in M$ and for all $t \in T$. This construction is functorial, too, inducing 
    \[\textbf{Aff}_{\rR(T)} \lra T\HMod.\]
    It is clear that if we extend the scalars from $T$ to $\rR(T)$ and then we restrict them again to $T$, we find the same heap of $T$-modules we started with and that this equality is natural. Analogously, any affine $\rR(T)$-module $(M,\triangleright)$ has $\triangleright$ uniquely determined by the elements $\tra tmn$, because $\tra0mn = m$ for all $m,n \in M$, and therefore the proof is concluded. 
\end{proof}

\section{Categorical aspects of heaps of modules}\label{HOM:CA}

In this section we explore the categorical aspects of heaps of modules over a truss.


\subsection{Limits}

Heaps of modules over a truss $T$, as well as (abelian) heaps and $T$-modules, form a category of $\langle \Omega,E \rangle$-algebras in the sense of universal algebra (see, e.g., \cite[page 120]{Mac:lane} for a short recall). As such, the category of heaps of modules over $T$ is monadic over $\Set$ (see, e.g., \cite[Chapter VI, \S8, Theorem 1]{Mac:lane}) and hence all limits in $T\HMod$ can be computed in $\Set$ and then endowed with the obvious heap of modules structure (see, e.g., \cite[Propositions 4.3.1]{Bor-Hand}). In what follows, we work explicitly with heaps of $T$-modules, but the interested reader may easily adapt the examples to the case of $T$-modules.

\begin{example}[Equalizers]
Let $f,g \colon M \to N$
 be a pair of morphisms of heaps of $T$-modules. Then $E \coloneqq \{x \in M\mid f(x)=g(x)\}$ is a sub-heap of $T$-modules of $M$. Since $E$ together with the inclusion map $i:E\to M$ is \textit{the equalizer} of the pair $(f,g)$ in the category of sets, it is easy to see that $(E,i)$ is also the equalizer of $(f,g)$ in the category $T\HMod$. We note that $E$ can be the empty heap. 
\end{example}

\begin{example}[Products]\label{ex:prods}
Let $(M_i)_{i\in I}$ be a family of heaps of $T$-modules. The product of the family $(M_i)_{i\in I}$ in $T\HMod$  can be realised by endowing their Cartesian product $\Prod_{i\in I}M_i$ with the component-wise heap of modules structure, together with the canonical projections $\pi_i:\Prod_{i\in I}M_i\to M_i$, $i\in I$. 

Notice that the heap bracket $[-,-,-]\colon M \times M \times M \to M$ becomes then a morphism of heaps of $T$-modules for every $M$ in $T\HMod$. Firstly, the fact that the heap $M$ is abelian entails that the bracket is a morphism of abelian heaps. Secondly, the base change property entails that, for all $e \in M$, 
\begin{equation}\label{eq:bracketTHeaps}
\begin{aligned}
t \triangleright_{[a,b,c]} [x,y,z] & \stackrel{\eqref{eq:basechange}}{=} \big[t \triangleright_e[x,y,z], t\triangleright_e[a,b,c],[a,b,c]\big] \\
& \stackrel{\phantom{\eqref{eq:basechange}}}{=} \big[t \triangleright_ex,t \triangleright_ey,t \triangleright_ez,t \triangleright_ea,t \triangleright_eb,t \triangleright_ec,a,b,c\big] \\
& \stackrel{\eqref{eq:basechange}}{=} \big[t \triangleright_a x, t\triangleright_by, t\triangleright_cz\big],
\end{aligned}
\end{equation}
which is exactly $[-,-,-]$ applied to $t\triangleright_{(a,b,c)}(x,y,z) = (t \triangleright_a x, t\triangleright_by, t\triangleright_cz)$.
\end{example}

\begin{example}[Pullbacks]
    Let $M \xrightarrow{f} O \xleftarrow{g} N$ be a pair of morphisms in $T\HMod$. Then $P \coloneqq \{(m,n) \in M \times N \mid f(m) = g(n)\}$ is a sub-heap of $T$-modules of the product $M \times N$ and, together with the obvious maps $\pi_M \colon P \to M, (m,n) \mapsto m,$ and $\pi_N \colon P \to N, (m,n) \mapsto n,$ it forms the pullback of the pair $(f,g)$.
\end{example}

\subsection{Colimits}

In this subsection we describe some (less obvious) colimits in the categories of (abelian) heaps, $T$-modules and heaps of $T$-modules. Existence of the colimits follows by the general theory of algebraic theories (see e.g. \cite[\S3.4]{Bor-Hand}), here we present their explicit descriptions.

\subsubsection{Modules}

Let $T$ be a truss. Colimits in the category of $T$-modules can be computed in the category of abelian heaps and then endowed with the obvious $T$-module structure. This is a consequence of the following result, which collects the observations and conclusions of \cite[pages 24 and 25]{BrzRyb:fun}, and of \cite[Propositions 4.3.2]{Bor-Hand}, in view of the fact that the tensor product $T \otimes - \colon \Ah \to \Ah$ is a left adjoint functor and hence preserves all colimits.

\begin{proposition}
Let $T$ be a truss.
\begin{enumerate}[leftmargin=0.8cm]
    \item\label{item:3.4.1} If $T$ is unital, then the forgetful functor from the category of unital $T$-modules to that of abelian heaps admits as left adjoint the free $T$-module functor $T \otimes-$ and it is monadic.
    \item\label{item:3.4.2} If $T$ is not necessarily unital, then the forgetful functor from the category of not necessarily unital $T$-modules to that of abelian heaps admits as left adjoint the free $T$-module functor $T_u \otimes- $, where $T_u$ is the unital extension of $T$, and it is monadic.
\end{enumerate}
\end{proposition}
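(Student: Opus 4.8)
The plan is to prove both statements by combining two facts: the known adjunction/monadicity of the tensor product functor $T\otimes-\colon \Ah\to\Ah$ with the analogous Dorroh-type reduction that relates the non-unital and unital cases. Throughout I would use that $\Ah$ is a symmetric monoidal category with $T\otimes-$ left adjoint to $\Ah(T,-)$ (cited as \cite[Propositions 3.7 and Remark 3.11]{BrzRyb:fun}), that a (unital) $T$-module is precisely a (unital) $T\otimes-$-algebra in the sense spelled out in \cite{BrzRyb:fun}, and Beck's monadicity theorem.

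For part \ref{item:3.4.1}, the first step is to observe that, when $T$ is unital, the data of a unital $T$-module structure on an abelian heap $M$ is equivalent to an action map $T\otimes M\to M$ satisfying associativity and unitality with respect to the multiplication and unit of $T$; this exhibits $T\Mod$ (unital) as the Eilenberg--Moore category of the monad $T\otimes-$ on $\Ah$, whose multiplication and unit come from $T\otimes T\to T$ and $\mathbb{I}\to T$ (here $\mathbb{I}$ is the monoidal unit of $\Ah$). Then the forgetful functor to $\Ah$ is, by construction, the forgetful functor from this Eilenberg--Moore category, so it is monadic and has the left adjoint $M\mapsto T\otimes M$. The only thing to check carefully is that the monad axioms for $T\otimes-$ encode exactly axioms (M1)--(M3) together with unitality — this is a bookkeeping translation between the pointwise module axioms and the monoidal-category formulation, and it is already implicit in the cited pages of \cite{BrzRyb:fun}.

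For part \ref{item:3.4.2}, the key input is the universal unital truss $T_u$ with $\jmath_T\colon T\to T_u$ from \S\ref{ssec:TrandRing}, whose defining property is the analogue of the Dorroh extension. The plan is to show that restriction of scalars along $\jmath_T$ gives an isomorphism of categories between unital $T_u$-modules and (not necessarily unital) $T$-modules — this is the truss-theoretic counterpart of the classical fact, recalled in the excerpt, that unital $R_u$-modules coincide with $R$-modules. Concretely, a not necessarily unital $T$-module $M$ is an abelian heap with a morphism of trusses $T\to\tT(\eE(M))$, and by the universal property of $T_u$ this extends uniquely to a morphism of \emph{unital} trusses $T_u\to\tT(\eE(M))$, i.e.\ to a unital $T_u$-module structure on $M$; conversely every unital $T_u$-module restricts to a $T$-module. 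One then checks these assignments are mutually inverse and act as the identity on morphisms, giving the isomorphism of categories. Combining this isomorphism with part \ref{item:3.4.1} applied to the unital truss $T_u$ yields that the forgetful functor $T\Mod\to\Ah$ is monadic with left adjoint $M\mapsto T_u\otimes M$.

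The main obstacle I anticipate is the faithful translation in part \ref{item:3.4.1}: verifying that the associativity and unit coherence conditions for the monad $T\otimes-$ — which live in the monoidal category $\Ah$ and involve the associator and the braiding — unwind precisely to the elementwise axioms (M1), (M2), (M3) and $1_T\cdot m = m$. In particular one must be attentive to the role of the braiding of $\Ah$ (needed to even write the multiplication of the monad, since it uses $T\otimes T\to T$ composed appropriately) and to the fact that (M2) is the statement that the action is a heap morphism in the $T$-variable, which is automatic once $T\otimes M$ is the universal bilinear-type object. Once that dictionary is set up, the rest is a routine application of Beck's theorem and the uniqueness of adjoints, and part \ref{item:3.4.2} is a formal consequence via $T_u$.
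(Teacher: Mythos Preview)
Your proposal is correct and matches the intended content. The paper does not actually supply a proof of this proposition: it is stated as a summary of facts from \cite[pages 24 and 25]{BrzRyb:fun}, so there is no argument in the paper to compare against beyond the citation itself. Your route --- identify unital $T$-modules with algebras over the monad $T\otimes-$ arising from the monoid $T$ in $\Ah$, then reduce the non-unital case via the isomorphism $T\Mod \cong T_u\Mod_1$ coming from the universal property of $T_u$ --- is precisely the content being cited.

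Two small points. First, the braiding is not in fact needed to write the monad multiplication: the map $T\otimes T\otimes M \to T\otimes M$ is simply $\mu_T\otimes\id_M$, which uses only the associator. The symmetry of $\Ah$ plays no role in part~\ref{item:3.4.1}. Second, once you have exhibited unital $T$-modules as the Eilenberg--Moore category of $T\otimes-$, monadicity of the forgetful functor is true \emph{by definition}; there is no need to invoke Beck's theorem. Your anticipated ``main obstacle'' --- matching the monad-algebra axioms with (M1)--(M3) and unitality --- is indeed just routine bookkeeping, since (M2) and (M3) together are exactly the statement that the action descends to a heap map $T\otimes M\to M$, and (M1) with unitality are associativity and unit of the algebra structure.
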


\subsubsection{Heaps of modules}

Colimits in the category of heaps of $T$-modules are more delicate and cannot be reduced to colimits in $\Ah$, as it happens for $T$-modules. In fact, the converse is true: since $\Ah$ is (isomorphic to) $\varnothing\HMod$, as in Example \ref{ex:ahTHmod}, colimits of abelian heaps are particular cases of colimits of heaps of modules.

\begin{construction}[Quotients - {see \cite[Proposition 3.8]{BrBrRySa}}]\label{rem:quotients}
Let $(M,[-,-,-],\triangleright)$ be a heap of $T$-modules and let $N \subseteq M$ be a sub-heap of $T$-modules. Consider the sub-heap relation $\sim_N$ from \cref{rem:subheaprel}. For $t \in T$, $m_1 \sim_N m_1'$ and $m_2 \sim_N m_2'$, 
\[
\Big[t \triangleright_{m_1'} m_2', t \triangleright_{m_1} m_2,n\Big] = \Big[t \triangleright_{m_1'} m_2', t \triangleright_{m_1} m_2,t\triangleright_n n\Big] \stackrel{\eqref{eq:bracketTHeaps}}{=} t\triangleright_{[m_1',m_2',n]} [m_1,m_2,n] \in N ,
\]
for some $n \in N$ and hence $\sim_N$ is a congruence over $(M,[-,-,-],\triangleright)$. Therefore, the set of equivalence classes $M/N$ is a heap of $T$-modules with
\[\Big[\ol{m_1},\ol{m_2},\ol{m_3}\Big] \coloneqq \ol{[m_1,m_2,m_3]} \qquad \text{and} \qquad t\triangleright_{\ol{m_1}} \ol{m_2} \coloneqq \ol{t \triangleright_{m_1} m_2}\, ,\]
and the canonical projection $\pi\colon M \to M/N$ is a morphism of heaps of $T$-modules satisfying the natural universal property: if $g \colon M \to M'$ is a morphism of heaps of $T$-modules such that $g(n) = e'\in M'$ for all $n \in N$, then there exists a unique morphism of heaps of $T$-modules $\tilde{g} \colon M/N \to M'$ such that $\tilde{g} \circ \pi = g$.

Furthermore, if $\sim$ is any congruence on $M$, then $\sim~ = ~\sim_N$ for the sub-heap of modules
\[N \coloneqq \left\{m \in M \mid m \sim e \textrm{ for a fixed } e\in M\right\} \subseteq M.\]
Indeed, we already know that $N$ has to be a (normal) sub-heap and, in addition, for $t \in T$ and $n,n'\in N$,
\[t \triangleright_n n' \sim t \triangleright_e e = e \in N,\]
so that $t \triangleright_n n' \in N$.
\end{construction}

In the following we will rely on the correspondence between heaps of $T$-modules and pointed $T$-modules as it is described in \cite{BrBrRySa} (see Remark \ref{rem:TgrpsTHmods}). In order to simplify the notation, if there is no danger of confusion, we suppress the index $e$ and we denote by $(M,+)$ the pointed $T$-module associated with $(M,[-,-,-],\triangleright)$. Recall also that with these notations, every morphism $f:M\to N$ of heaps of $T$-modules has the form $f=\alpha+f(0)$, where $\alpha:(M,+)\to (N,+)$ is a morphism of pointed $T$-modules.

\begin{proposition}[Coequalizers]\label{prop:coeq}
For every pair of parallel arrows $(f,g)$ from $G$ to $H$ in $T\HMod$, let $[[ f,g]]_e$ denote the sub-heap of $T$-modules of $H$ generated by a chosen $e \in H$ and the elements $[f(x),g(x),e]$ for all $x \in G$. Then, the coequalizer of $(f,g)$ can be realized as $H/[[f,g]]_e$ with the canonical projection.
\end{proposition}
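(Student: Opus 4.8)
The plan is to verify the universal property of the coequalizer directly, taking advantage of the dictionary between heaps of $T$-modules and pointed $T$-modules recorded in \cref{rem:TgrpsTHmods} and the quotient construction in \cref{rem:quotients}. Fix the chosen element $e \in H$ and write $N \coloneqq [[f,g]]_e$. By \cref{rem:quotients}, $H/N$ is a heap of $T$-modules and the canonical projection $\pi \colon H \to H/N$ is a morphism of heaps of $T$-modules. First I would check that $\pi f = \pi g$: for every $x \in G$ we have $[f(x),g(x),e] \in N$, which exactly says $f(x) \sim_N g(x)$ by the description of the sub-heap relation (since $N$ is generated over $e$, so it contains $e$, and $\sim_N$ is the congruence of \cref{rem:subheaprel}); hence $\pi(f(x)) = \pi(g(x))$.

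Next I would verify the universal property. Let $h \colon H \to K$ be a morphism of heaps of $T$-modules with $hf = hg$. I want to show $h$ factors uniquely through $\pi$. By \cref{rem:quotients}, it suffices to show that $h$ is constant on $N$, i.e. $h(n) = h(e)$ for all $n \in N$; then the induced $\tilde h \colon H/N \to K$ with $\tilde h \pi = h$ exists and is unique (uniqueness because $\pi$ is surjective). To see $h$ is constant on $N$, pass to pointed $T$-modules by fixing $e$ as the base point: write $H$ for the pointed $T$-module $\gG(H;e)$, so that $h = \alpha + h(e)$ for a morphism $\alpha \colon \gG(H;e) \to \gG(K;h(e))$ of pointed $T$-modules, as recalled just before the statement. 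Then $h$ is constant on $N$ if and only if $\alpha$ vanishes on $N$ (viewing $e$ as $0$). Now $N = [[f,g]]_e$, regarded in the pointed picture, is the pointed $T$-submodule $\langle f(x) - g(x) : x \in G\rangle_T$ of $\gG(H;e)$ generated by the elements $f(x) - g(x)$ — this is because a sub-heap of $H$ containing $e$ corresponds precisely to a pointed $T$-submodule of $\gG(H;e)$, and $[f(x),g(x),e]$ becomes $f(x) - g(x) + e = f(x) - g(x)$ under this identification. Since $hf = hg$ gives $\alpha(f(x)) + h(e) = \alpha(g(x)) + h(e)$, i.e. $\alpha(f(x) - g(x)) = 0$ for all $x$, and $\alpha$ is $\rR(T)$-linear (equivalently, a morphism of pointed $T$-modules), it kills the whole submodule generated by these elements, hence $\alpha|_N = 0$, as needed.

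Finally I would note the edge cases: if $H = \varnothing$ there is nothing to choose and the statement is vacuous/degenerate, and if $G = \varnothing$ the two maps are the unique ones and $[[f,g]]_e$ is just the sub-heap generated by $e$, i.e. $\{e\}$, so the coequalizer is $H$ itself, which is correct since $f = g$. I would also remark that the construction is independent of the choice of $e$ up to canonical isomorphism, since different choices give isomorphic quotients via a translation automorphism (which is a morphism of heaps of $T$-modules by the remark following the definition of isotropic heaps of modules).

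The main obstacle I expect is making the translation between the heap-of-modules language and the pointed-$T$-module language fully rigorous — specifically, checking carefully that the sub-heap of $H$ generated by $e$ together with the elements $[f(x),g(x),e]$ corresponds, under the retract functor $\Gg(-;e)$, to the pointed $T$-submodule generated by the differences $f(x) - g(x)$, so that $\rR(T)$-linearity of $\alpha$ can be invoked to conclude $\alpha$ vanishes there. Everything else is a routine application of the universal property of the quotient from \cref{rem:quotients} together with surjectivity of $\pi$.
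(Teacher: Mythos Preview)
Your proposal is correct and complete, but it takes a different route from the paper's own proof of this proposition. The paper argues entirely in heap-of-modules language: it first checks directly, via the identity \eqref{eq:bracketTHeaps}, that the set $[[f,g]] = \{[f(x),g(x),e] \mid x \in G\}$ is already a sub-heap of $T$-modules, and then shows $h([[f,g]]_e) = \{h(e)\}$ by observing that $h\big([f(x),g(x),e]\big) = h(e)$ and that $h(t\triangleright_u v) = t\triangleright_{h(u)}h(v) = h(e)$ for all $u,v \in [[f,g]] \cup \{e\}$, so $h$ collapses the whole generated sub-heap. Your argument instead retracts at $e$, identifies $[[f,g]]_e$ with the $\rR(T)$-submodule of $\gG(H;e)$ generated by the differences $f(x)-g(x)$, and kills it using $\rR(T)$-linearity of the associated group homomorphism. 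What you do is essentially the content of \cref{con:coeq}, which the paper presents immediately afterwards as an alternative, more explicit construction; so the two approaches are complementary --- the paper's proof is self-contained in the heap language and avoids the translation you flag as the ``main obstacle'', while yours makes the link with the module-theoretic picture transparent from the outset and lets standard linear-algebraic reasoning do the work. One minor point: when you retract $K$ at $h(e)$ the map $h$ is already a pointed $T$-module morphism (it sends the zero $e$ to the zero $h(e)$), so the decomposition $h = \alpha + h(e)$ is just $\alpha = h$; this slightly simplifies your write-up.
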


\begin{proof}
    For the sake of simplicity, we will take advantage of the conventions from the end of \cref{ssec:HoM}. Let $f,g \colon G \to H$ be two parallel morphisms of heaps of $T$-modules. If $G=\varnothing$ then it is easy to see that the coequalizer of the pair $(f,g)$ is $H$    with the identity map. Suppose then that $G$ is not empty, whence $H\neq \varnothing$, too. Without loss of generality, pick $e \in H$ and consider the set
    \[[[ f,g ]] \coloneqq \big\{[f(x),g(x),e] \in H \mid x \in G\big\}.\]
    This is a sub-heap of $T$-modules of $H$ since
    \[
    t\triangleright_{[f(x),g(x),e]}\big[f(y),g(y),e\big] \stackrel{\eqref{eq:bracketTHeaps}}{=} \big[t \triangleright_{f(x)} f(y), t\triangleright_{g(x)} g(y), t \triangleright_e e\big] = \big[f(t \triangleright_xy),g(t\triangleright_x y),e \big] ,
    \]
    for all $x,y \in G$. Let $[[ f,g]]_e$ be the sub-heap of $T$-modules of $H$ generated by $[[ f,g]]$ and $e$. Then we claim that $\mathsf{coeq}(f,g) = H/[[ f,g]]_e$ with the canonical projection $\pi \colon H \to H/[[ f,g]]_e$ as coequalizer morphism. First of all, since $e \in [[ f,g]]_e$ and $[f(x),g(x),e] \in [[ f,g]]_e$, too, \cref{rem:quotients} entails that $\pi(f(x)) = \pi(g(x))$ for all $x \in G$. Secondly, if $h \colon H \to K$ is a morphism of heaps of $T$-modules such that $h \circ f = h \circ g$, then
    \[h\big([f(x),g(x),e]\big) = \big[hf(x),hg(x),h(e)\big] = h(e).\]
    This implies that $h\big([[ f,g]]\big) = \{h(e)\}$, that $h\big(t \triangleright_u v\big) = t \triangleright_{h(u)} h(v) = t \triangleright_{h(e)} h(e) = h(e)$ for all $u,v \in [[ f,g]] \cup\{e\}$ and hence that $h\big([[ f,g]]_e\big) = \{h(e)\}$. Thus, by \cref{rem:quotients}, there exists a unique morphism $\tilde{h} \colon H/[[ f,g]]_e \to K$ such that $\tilde{h} \circ \pi = h$.
\end{proof}

\begin{remark}
Let $M$ be a non-empty heap of $T$-modules, $e \in M$, and $N \subseteq M$ be a sub-heap of $T$-modules. For the sake of making the construction from the proof of \cref{prop:coeq} a bit more concrete, let us explicitly construct the sub-heap of $T$-modules $N_e$ generated by $N$ and $e$. Consider the sub-heap $P \subseteq M$ generated by the set $X \coloneqq \{e\} \cup N \cup \{t \triangleright_e n \mid n\in N, t \in T\}$, that is, the set $W(X)$ of all the odd-length brackets $[x_1, x_2, \cdots, x_{2n+1}]$ in $M$ with $x_1,\ldots,x_{2n+1} \in X$. Since for all $s \in T$ we have that
\[s \triangleright_e e = e \in X, \qquad s \triangleright_e n \in X, \qquad s \triangleright_e t \triangleright_e n = st \triangleright_e n \in X, \qquad \big(\forall\,n \in N, t\in T\big)\]
and since $s \triangleright_e (-)\colon M \to M$ is a morphism of heaps, $W(X)$ is closed under the ternary operation 
\[
T \otimes M \otimes M \lra M, \qquad t \otimes a \otimes b \lto t \triangleright_a b = [t \triangleright_e b, t \triangleright_e a ,a],
\]
and so it is a sub-heap of $T$-modules. It is clearly the smallest one containing $e$ and $N$, whence $W(X) = N_e$. A less canonical, but equivalent, construction in case $N \neq \varnothing$ allows us to realise $N_e$ as the subheap generated by the set $X' \coloneqq N \cup \{e\} \cup \{t \triangleright_f e \mid t \in T\}$ for $f \in N$ (the construction does not depend on the choice of $f$). That is to say, for $N \neq \varnothing$ and $f \in N$, $N_e$ can be equivalently described as the $\rR(T)$-submodule of $(M,+_e,\cdot_e)$ generated by the congruence class $f + \tau_f^e(N)$ of the equivalence modulo the $\rR(T)$-submodule $\tau_f^e(N)$, or as the $\rR(T)$-submodule of $(M,+_f,\triangleright_f)$ obtained by appending $e$ to the submodule $N$.
\end{remark}

\begin{construction}[Coequalizers]\label{con:coeq}
Let us explicitly construct the coequalizer for the pair of morphisms of heaps of $T$-modules
 $f,g \colon G \to H$ by taking advantage of the associated pointed $T$-module construction. 
Suppose that $G$ is not empty. Hence $H\neq \varnothing$. In order to construct the coequalizer for the pair $(f,g)$, we fix two pointed $T$-module structures $(G,+)$ and $(H,+)$. Then $f=\alpha+a$ and $g=\beta+b$ with $\alpha,\beta\in T\GMod(G,H)$ and $a,b\in H$. 

Let $h:H\to K$ be a morphism of heaps of $T$-modules. We also fix a pointed $T$-module structure $(K,+)$. Then $h=\gamma+c$ with $\gamma\in T\GMod(H,K)$ and $c\in K$. We observe that 
$$hf=hg \quad \Leftrightarrow \quad \gamma\alpha+\gamma(a)+c=\gamma\beta+\gamma(b)+c \quad \Leftrightarrow \quad \gamma\alpha-\gamma\beta=\gamma(a)-\gamma(b).$$ 
Since $\gamma\alpha-\gamma\beta$ is a morphism of pointed $T$-modules, it follows that $hf=hg$ if and only if $\gamma\alpha=\gamma\beta$ and $\gamma(a)=\gamma(b)$. 

This suggests choosing as the group associated to the coequalizer of $(f,g)$ the factor group $C=H/( \Ima(\alpha-\beta)+ \langle a-b\rangle_T)$ (where $\langle a-b\rangle_T$ is the pointed $T$-submodule generated by $a-b$) together with the canonical surjection $\pi_C:H\to C$. 

Using the definition of $C$ it follows that $\pi_Cf=\pi_Cg$. Let $h=\gamma+c:H\to K$ be a morphism of heaps of $T$-modules such that $hf=hg$. Then  $\gamma\alpha=\gamma\beta$ and $\gamma(a)=\gamma(b)$. Then $\overline{h}:C\to K$, $\overline{h}(x+( \Ima(\alpha-\beta)+ \langle a-b\rangle_T))=\gamma(x)+c$ is well defined and $\overline{h}\pi_C=\gamma+c=h$. Moreover, since $\pi_C$ is a surjective map, it follows that $\overline{h}$ is the unique map $C\to K$ which provides a factorisation of $h$ through $\pi_C$. Hence $(C,\pi_C)$ is the coequalizer for the pair $(f,g)$, as the argument does not depend on the particular choice of the pointed $T$-module structures.
\end{construction}

Now, we would like to construct the coproduct of a family of heaps of $T$-modules. In order to do this, we need the following

\begin{lemma}\label{free-subgroup}
Let $C$ be a pointed $T$-module, and let $(c_j)_{j\in J}$ be a family of elements of $C$ such that for every pointed $T$-module $H$ and every family $(a_j)_{j\in J}$ of elements of $H$ there exists a morphism of pointed $T$-modules $\alpha:C\to H$ such that $\alpha(c_j)=a_j$. Then $C$ admits a decomposition $C=B\oplus F$, where $F$ is a free pointed $T$-module and $(c_j)_{j\in J}$ is a basis for $F$. 
\end{lemma}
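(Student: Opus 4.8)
The plan is to build the free pointed $T$-module $F$ on the index set $J$ using \cref{prop:freeTgroup}, and then construct a retraction of the canonical map $F \to C$ that sends the $j$-th generator to $c_j$.

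\medskip

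First I would let $F \coloneqq F_T(J)$ be the free pointed $T$-module on the set $J$, with its canonical basis $(b_j)_{j\in J}$ (the images of the elements of $J$ under the unit of the adjunction of \cref{prop:freeTgroup}). By freeness, there is a unique morphism of pointed $T$-modules $\beta \colon F \to C$ with $\beta(b_j) = c_j$ for all $j \in J$. On the other hand, the hypothesis on $C$, applied to the pointed $T$-module $H = F$ and the family $(b_j)_{j \in J}$, produces a morphism of pointed $T$-modules $\gamma \colon C \to F$ with $\gamma(c_j) = b_j$ for all $j$. Then $\gamma \circ \beta \colon F \to F$ is a morphism of pointed $T$-modules fixing every basis element $b_j$, hence $\gamma \circ \beta = \id_F$ again by the uniqueness part of the universal property of $F$. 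Therefore $\beta$ is a split monomorphism in the abelian category $T\GMod \cong \rR(T)\Mod$ (\cref{thm:dummyP}).

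\medskip

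Now I would invoke the fact that $T\GMod$ is an abelian category (\cref{thm:dummyP}): a split monomorphism $\beta \colon F \to C$ with retraction $\gamma$ yields a direct sum decomposition $C = \im(\beta) \oplus \ker(\gamma)$. Setting $B \coloneqq \ker(\gamma)$ and identifying $F$ with its isomorphic copy $\im(\beta)$ in $C$, we obtain $C = B \oplus F$ as pointed $T$-modules. Since $\beta$ restricts to a bijection between the basis $(b_j)_{j\in J}$ of $F$ and the family $(c_j)_{j\in J}$, the latter is a basis of the free pointed $T$-module $\im(\beta) \cong F$ sitting as a direct summand of $C$. This is precisely the asserted decomposition.

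\medskip

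I do not anticipate a serious obstacle here; the argument is the standard ``a section of a map out of a free object splits it off'' trick, and the only points requiring the earlier machinery are the existence of free pointed $T$-modules (\cref{prop:freeTgroup}) and the abelian-ness of $T\GMod$ (\cref{thm:dummyP}), both already available. The mildly delicate point to get right is the direction of the maps — one must feed $F$ (not $C$) into the hypothesis to obtain the retraction $\gamma$, and then use the uniqueness clause of the universal property of the \emph{free} object $F$ to conclude $\gamma\beta = \id_F$; attempting to argue on the $C$ side directly would not work since $C$ has no stated universal property beyond the weak lifting hypothesis.
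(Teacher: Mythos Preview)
Your proposal is correct and follows essentially the same approach as the paper: use the hypothesis to produce a retraction onto a free object, then split. The only cosmetic difference is that the paper defines $F$ internally as the pointed $T$-submodule $\langle c_j\mid j\in J\rangle_T\subseteq C$ and first argues (via the hypothesis) that this submodule is free with basis $(c_j)$, whereas you start from the abstract free module $F_T(J)$ and identify it with its image under $\beta$; the two descriptions match once you note $\im(\beta)=\langle c_j\mid j\in J\rangle_T$.
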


\begin{proof}
Let $F=\langle c_j\mid j\in J\rangle_T\leq C$. Using the hypothesis, it follows that $F$ is free and that $(c_j)_{j\in J}$ is a basis for $F$. Moreover, there exists a morphism of pointed $T$-modules $\pi:C\to F$ such that $\pi(c_j)=c_j$ for all $j\in J$. If $\iota:F\to C$ is the inclusion map then $\pi\iota=\id_F$, hence $F$ is a direct summand of $C$ as an abelian group. Since $\pi$ and $\iota$ are morphisms of pointed $T$-modules, the conclusion is obvious.  
\end{proof}

\begin{construction}[Coproducts]\label{con:coprod}
The coproduct of an empty family is the initial object of the category of heaps of $T$-modules (the empty heap).  
Since the empty heap of $T$-modules is the initial object in our category, it is enough to study the coproduct of a family of non-empty members.

Let $(G_i)_{i\in I}$ be a family of non-empty heaps of $T$-modules. In order to construct a coproduct for it, we proceed as follows:
\begin{enumerate}[label=(\alph*),leftmargin=0.8cm]
\item for every $i\in I$ we fix a pointed $T$-module structure $(G_i,+)$ associated to $G_i$; 
\item we fix an index $i_0\in I$ and a family of symbols $(b_i)_{i\in I\setminus \{i_0\}}$;
\item let $C$ be the heap associated with the pointed $T$-module $(\oplus_{i\in I}G_i)\oplus F$, where $F$ is the free pointed $T$-module which has as a basis the family $(b_i)_{i\in I\setminus \{i_0\}}$;
\item if $u_i:G_i\to \oplus_{i\in I}G_i$ are the canonical morphisms, we define $\upsilon_{i_0}=u_{i_0}$, and $\upsilon_{i}=u_i+b_i$ for all $i\in I\setminus \{i_0\}$.  
\end{enumerate}
Then the pair $(C, (\upsilon_i)_{i\in I})$ represents a coproduct of the family $(G_i)_{i\in I}$ in $T\HMod$.

Indeed, let $\alpha_i:G_i\to H$, $i\in I$, be a family of morphisms of heaps of $T$-modules. We fix a pointed $T$-module structure on $H$. Then $\alpha_i=f_i+a_i$, where $\alpha_i$ are morphisms of pointed $T$-modules and $a_i\in H$, for all $i\in I$. There exists a unique morphism of pointed $T$-modules $f:\bigoplus_{i\in I}G_i\to H$ such that $f_i=fu_i$ for all $i\in I$. Moreover, we consider the morphism $\gamma:F\to H$ defined by the rules $\gamma(b_i)=a_i-a_{i_0}.$

We define $\alpha:C\to H$ by the rule 
$$\alpha(g+c)=f(g)+\gamma(c)+a_{i_0} \textrm{ for all } g\in \bigoplus_{i\in I}G_i \textrm{ and }c\in F.$$
Note that $\alpha$ is well defined since the representation of the elements of $C$ as $g+c$ with  $g\in \bigoplus_{i\in I}G_i$ and $c\in F$ is unique. Putting $b_{i_0}=0$, we have
$$\alpha \upsilon_i(x)=\alpha(u_i(x)+b_i)=fu_i(x)+\gamma(b_i)+a_{i_0}=f_i(x)+a_i,$$
hence $\alpha\upsilon_i=\alpha_i$ for all $i\in I$. 

Suppose that $\alpha':C\to H$ is a morphism such that $\alpha'\upsilon_i=\alpha_i$ for all $i\in I$. Keeping the pointed $T$-modules fixed in (a) for the $G_i$ and above for $H$, we observe that 
$$
\alpha'(g+b)=f'(g)+\gamma'(b)+a,
$$ 
with $f':\bigoplus_{i\in I}G_i\to H$ and $\gamma':F\to H$ morphisms of pointed $T$-modules, and $a\in H$.

From $\alpha'\upsilon_{i_0}=\alpha_{i_0}$ we obtain $f'u_{i_0}+a=\alpha_{i_0}$, hence $a=a_{i_0}$ and $f'u_{i_0}=f_{i_0}$. 

For $i\neq i_0$ we obtain $f'u_i+\gamma(b_i)+a_{i_0}=f_i+a_i$, hence $f'u_i=f_i$ and $\gamma(b_i)=a_i-a_{i_0}$. Now the equality $f=f'$ follows from the universal property of the direct sum $\bigoplus_{i\in I}G_i$.  
\end{construction}

\begin{example}[Pushouts]\label{ex:push}
    Since pushouts have a particularly nice form when compared to the other colimits, let us construct the pushout of the diagram $K \xleftarrow{f} G \xrightarrow{g} H$ in $T\HMod$ for $G$ non-empty. Pick $e \in G$. Then $\gG\big(K;f(e)\big) \xleftarrow{f} \gG\big(G;e\big) \xrightarrow{g} \gG\big(H;g(e)\big)$ is a diagram in $T\GMod$, whose pushout can be realised as
    \[\Big(P \coloneqq \gG\big(K;f(e)\big) \oplus  \gG\big(H;g(e)\big) / \langle f-g\rangle,\eta_K\colon K \to P, \eta_H\colon H \to P\Big).\]
    We claim that the heap of $T$-modules arising from $P$ is the pushout in the category of heaps of $T$-modules. In fact, if $K \xrightarrow{k} L \xleftarrow{h} H$ in $T\HMod$ are such that $k \circ f = h \circ g$, then $kf(e) = hg(e)$ and $\gG\big(K;f(e)\big) \xrightarrow{k} \gG\big(L;hg(e)\big) \xleftarrow{h} \gG\big(H;g(e)\big)$ are arrows in $T\GMod$ such that $k \circ f = h \circ g$, whence there exists a unique morphism $l \colon P \to L$ of pointed $T$-modules such that $l \circ \eta_K = k$ and $l \circ \eta_H = h$. This is also a morphism of heaps of $T$-modules and if $l'\colon P \to L$ is another such morphism satisfying the same property, then 
    \[l'\big((f(e),g(e))\big) = l'\eta_K\big(f(e)\big) = kf(e) = hg(e)\,,\]
    and hence $l'\colon P \to \gG(L;hg(e))$ is a morphism of pointed $T$-modules, which has to coincide with $l$ by uniqueness. Therefore, the functor $\Gg$ from Remark \ref{rem:TgrpsTHmods}, determined by retracting at a chosen element, creates pushouts.
\end{example}

\begin{example}[{$\star \sqcup \star$}]\label{ex:starcupstar}
    Recall that we denote by $\star$ the singleton $\{*\}$. It has a natural heap of $T$-modules structure given by $[*,*,*] = *$ and $t\triangleright_** = *$ for all $t \in T$. The (unique) associated group structure is the trivial group $\{0\}$. If $T$ admits an identity $1_T$, then the coproduct $\star \sqcup \star$ in $T\HMod$ can be realised as the heap of $T$-modules associated to the free pointed $T$-module $\rR(T)$ from \cref{cor:freeTgrp*}; that is, the set $\gG(T;1_T) \times \ZZ$ with heap of $T$-modules structure
    \[
    \begin{aligned}
    \big[(t,n),(t',n'),(t'',n'')\big] & = \big([t,t',t''],n-n'+n''\big) \\
    t \triangleright_{(t',m)} (t'',n) & = \big[t\cdot(t'',n),t\cdot(t',m),(t',m)\big] \\
    & = \Big(tt'' + (n-1)t - tt' - (m-1)t + t',n\Big) \\
    & = \Big(t \triangleright_{t'}t'' + (n-m)t, n\Big)
    \end{aligned}
    \]
    and canonical maps $\eta_1 \colon \star \to \rR(T), * \mapsto (1_T,0)$ and $\eta_2 \colon \star\to \rR(T), * \mapsto (1_T,1)$. This is easily checked directly by observing that $t \triangleright_{(1_T,0)}(s,n) = t \cdot (s,n)$, since $(1_T,0)$ is the zero of the ring $\rR(T)$, and therefore (see \eqref{eq:RTgen}) 
    \[
    \begin{aligned}
    (t,n) & = t \triangleright_{(1_T,0)} (1_T,1) + (n-1)(1_T,1) \\
    & = 
    \begin{cases} 
    \big[t \triangleright_{(1_T,0)} (1_T,1),(1_T,0),(1_T,1),(1_T,0),\ldots,(1_T,1)\big], & n-1 \geq 0, \\
    \big[t \triangleright_{(1_T,0)} (1_T,1),(1_T,1),(1_T,0),\ldots,(1_T,1),(1_T,0)\big], & n-1 < 0,
    \end{cases}
    \end{aligned}
    \]
    which means that any morphism of heaps of $T$-modules $\phi \colon \rR(T) \to M$ is uniquely and freely determined by $\phi(1_T,0)$ and $\phi(1_T,1)$, and that any morphism of heaps of $T$-modules $\psi \colon M \to \rR(T)$ such that $\{(1_T,0),(1_T,1)\}\subseteq \Ima(\psi)$ is automatically surjective.

    If $T$ does not have identity, then we carry on the same argument by replacing $\rR(T)$ with its Dorroh extension $\rR(T)_u$, which is unital. In particular, if $T = \varnothing$, then $\star \sqcup \star = \hH(\ZZ)$, the abelian heap associated with the group of integers. This is consistent with the fact that $\varnothing\HMod \cong \Ah$.
\end{example}

Our next result, \cref{thm:slices}, relies deeply on the categorical constructions we performed above, but it also provides a novel and easier way of realising them. It also supports once more the interpretation of heaps of $T$-modules as ``affine spaces over $T$''.

We first need a short technical lemma.

\begin{lemma}\label{lem:isotropic}
Let $T$ be a truss. Then the categories of heaps of $T$-modules and of isotropic heaps of $T_u$-modules are isomorphic. 
\end{lemma}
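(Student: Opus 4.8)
The plan is to exhibit explicit, mutually inverse functors between $T\HMod$ and the category of isotropic heaps of $T_u$-modules, both acting as the identity on underlying sets and morphisms, so that the isomorphism is witnessed by a bijection of the module data. First I would recall from \cref{ssec:HoM} and \cref{prop:HtM} that a heap of $T$-modules structure on an abelian heap $M$ is the same as a morphism of abelian heaps $\Delta\colon M \to \trs(T,\eE(M))$ satisfying \ref{item:2.19a} and \ref{item:2.19b}; similarly an isotropic heap of $T_u$-modules structure is a morphism $\Delta_u\colon M \to \trs(T_u,\eE(M))$ satisfying the analogous conditions \emph{plus} the isotropy constraint $\Delta_u(e)(1_{T_u}) = \id_M$ for all $e\in M$ (this is exactly what isotropy says in this language, since $\Delta_u(e)(1_{T_u})(m) = 1_{T_u} \triangleright_e m$). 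The key algebraic input is the universal property of the unital extension $\jmath_T\colon T \to T_u$ recalled in \cref{ssec:TrandRing}: precomposition with $\jmath_T$ gives a bijection between unital truss morphisms $T_u \to S$ and (not necessarily unital) truss morphisms $T \to S$, for any unital truss $S$.

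The main step is then the following dictionary. Given a heap of $T$-modules $(M,\triangleright)$ with associated $\Delta\colon M \to \trs(T,\eE(M))$, for each $e\in M$ the map $\Delta(e)\colon T \to \eE(M)$ is a truss morphism; since $\eE(M) = \Ah(M,M)$ is a \emph{unital} truss (with unit $\id_M$), the universal property of $T_u$ yields a unique unital truss morphism $\widetilde{\Delta(e)}\colon T_u \to \eE(M)$ with $\widetilde{\Delta(e)} \circ \jmath_T = \Delta(e)$. I would set $\Delta_u(e) \coloneqq \widetilde{\Delta(e)}$ and check that $e \mapsto \Delta_u(e)$ is still a morphism of abelian heaps $M \to \trs(T_u,\eE(M))$ — this follows because the unital extension is functorial and the bracket in $\trs(T_u,\eE(M))$ and $\trs(T,\eE(M))$ is computed pointwise, so $\Delta_u([e,f,g])$ and $[\Delta_u(e),\Delta_u(f),\Delta_u(g)]$ are two unital truss morphisms $T_u \to \eE(M)$ that agree after precomposing with $\jmath_T$, hence coincide by uniqueness. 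The same ``two lifts that agree on $T$ must be equal'' argument upgrades conditions \ref{item:2.19a} and \ref{item:2.19b} for $\Delta$ to the corresponding conditions for $\Delta_u$ (both sides of each identity, read as maps out of $T_u$ or as elements obtained by evaluating such maps, are determined by their restriction along $\jmath_T$). Isotropy of the resulting $T_u$-module structure is immediate: $\Delta_u(e)(1_{T_u}) = \id_M$ because $\Delta_u(e)$ is a \emph{unital} truss morphism. Conversely, given an isotropic heap of $T_u$-modules with data $\Delta_u\colon M \to \trs(T_u,\eE(M))$, I restrict along $\jmath_T$ to get $\Delta \coloneqq (\jmath_T)^* \circ \Delta_u \colon M \to \trs(T,\eE(M))$, which trivially inherits the heap-morphism property and conditions \ref{item:2.19a}, \ref{item:2.19b}. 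That these two assignments are mutually inverse is precisely the bijectivity in the universal property of $T_u$; since both constructions leave the underlying abelian heap and all morphisms untouched, and a morphism $f\colon M \to N$ satisfies \eqref{eq:morph} for the $T$-structures if and only if it does for the $T_u$-structures (again by the ``determined on $T$'' principle, applied to the two truss morphisms $\eE(f)\circ\Delta_u^M(-)$ and $\Delta_u^N(f(-))\circ\eE(f)$ when one needs the converse direction — or simply because on elements $f(t\triangleright_m n)=t\triangleright_{f(m)}f(n)$ for all $t\in T$ extends to all of $T_u$ by the uniqueness of the lift), we get an isomorphism of categories.

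The step I expect to be the main obstacle — though it is conceptually routine — is verifying that condition \ref{item:2.19b}, namely $\tau_e^f \circ \Delta_u(e)(t) \circ \tau_f^e = \Delta_u(f)(t)$ for all $t \in T_u$, survives the extension: the subtlety is that the left-hand side, as a function of $t$, is \emph{a priori} only additive/heap-compatible, not obviously multiplicative, so one must argue that $t \mapsto \tau_e^f \circ \Delta_u(e)(t) \circ \tau_f^e$ is a unital truss morphism $T_u \to \eE(M)$. This does hold — conjugation by the heap automorphism $\tau_f^e$ is a truss automorphism of $\eE(M)$ fixing $\id_M$ once one knows $\tau_e^f\circ(-)\circ\tau_f^e$ respects composition, which it does since $\tau_f^e \circ \tau_e^f = \id_M$ — so both sides of \ref{item:2.19b} are unital truss morphisms out of $T_u$ agreeing along $\jmath_T$ (because \ref{item:2.19b} holds for $\Delta$), hence equal. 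Once this is in place the remaining checks are mechanical, and the overall proof is short. An alternative, perhaps cleaner, write-up would bypass \cref{prop:HtM} entirely and argue directly on the operation $\triangleright\colon T\otimes M\otimes M\to M$, extending it along $\jmath_T\otimes\id_M\otimes\id_M$ using the universal property of $-\otimes M$ together with that of $T_u$; but the endomorphism-truss formulation makes the role of unitality (hence isotropy) most transparent, so that is the route I would take.
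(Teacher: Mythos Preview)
Your proposal is correct and follows essentially the same route as the paper: both use \cref{prop:HtM} to rephrase the data as $\Delta\colon M\to\trs(T,\eE(M))$, extend each $\Delta(e)$ to $T_u$ via the universal property of $\jmath_T$ (using that $\eE(M)$ is unital), then restrict back, with the identity on underlying heaps and morphisms. The only cosmetic difference is in checking condition \ref{item:2.19b} for the extended $\Delta_u$: the paper observes that both sides are heap morphisms in $t$ and checks them on $\jmath_T(T)$ and on $1_{T_u}$ separately, whereas you argue that conjugation by $\tau_f^e$ is a unital truss automorphism of $\eE(M)$ so both sides are unital truss morphisms out of $T_u$ agreeing along $\jmath_T$ --- same conclusion, same mechanism.
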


\begin{proof}
Let $M$ be a heap of $T$-modules. In view of \cref{prop:HtM}, this is equivalent to claim that we have a morphism of abelian heaps
\[\Delta_M \colon M \to \trs(T,\eE(M))\]
satisfying \ref{item:2.19a} and \ref{item:2.19b}. Since for every $m \in M$ we have that $\Delta_M(m) \colon T \to \eE(M)$ is a truss morphism and $\eE(M)$ is unital, the universal property of the Dorroh extension $T_u$ ensures that there is a unique unital extension $\tilde\Delta_M(m) \colon T_u \to \eE(M)$ such that $\tilde\Delta_M(m) \circ \jmath_T = \Delta_M(m)$ (see the end of \cref{ssec:TrandRing}). The uniqueness part allows us to define a heap morphism
\[\tilde\Delta_M \colon M \to \trs(T_u,\eE(M)),\]
which still satisfies \ref{item:2.19a} and \ref{item:2.19b} and converts $M$ into an isotropic heap of $T_u$-modules. In fact, since pre- and post-composing by heap morphisms is a heap morphism, it suffices to check that \ref{item:2.19a} and \ref{item:2.19b} are satisfied on elements $t \in T$ (which is obviously the case) and on $* = 1_{T_u}$ (which is the case because $\tilde\Delta_M(m)(*) = \id$ for every $m \in M$). Summing up, we can construct a functor 
\[\Phi \colon T\HMod\to T_u\HMod_{\mathbf{is}},\qquad  
(M,\triangleright) \mapsto (M,\tilde{\triangleright}), 
\]
(where $\tilde\triangleright$ is associated with $\tilde\Delta_M$ as in \cref{prop:HtM}) and which is the identity on morphisms. Concretely, $\tilde\triangleright$ is uniquely determined by
\[t ~ \tilde\triangleright_m ~ n = t \triangleright_mn \qquad \text{and} \qquad * ~ \tilde\triangleright_m ~ n = n\]
for all $t \in T$, $m,n \in M$.
In the opposite direction, let us consider the ``restriction of scalars'' functor 
\[
\Xi \colon T_u\HMod_{\mathbf{is}}\to T\HMod, \qquad (M,\triangleright)\mapsto (M,\triangleright\circ (\jmath_T\times \id_M\times \id_M)),
\]
where $\jmath_T:T\to T_u$ is a canonical embedding, and which acts as the identity of morphisms. One can easily check that $\Xi$ is a well-defined functor.

Now, $\Xi\circ \Phi=\id_{T\HMod}$ as if $(M,\triangleright)\in T\HMod$, then extending and restricting the scalars gives back the original heap of modules:
\[
(\Xi\circ \Phi) (M,\triangleright)=(M,\tilde{\triangleright}\circ (\jmath_T\times \id_M\times \id_M))=(M,\triangleright),
\]
by definition of $\tilde{\triangleright}$. Similarly $\Phi\circ\Xi=\id_{T_u\HMod_{\mathbf{is}}}$, as if $(M,\triangleright)\in T_u\HMod_{\mathbf{is}}$, then
\[
(\Phi\circ\Xi)(M,\triangleright) = \left(M,\widetilde{\triangleright\circ (\jmath_T\times \id_M\times \id_M)}\right) = (M,\triangleright),
\]
where the last equality follows from the fact that the unique way of extending the restricted heap of module structure $\triangleright\circ (\jmath_T\times \id_M\times \id_M)$ to $T_u$ is by letting $*$ act as the identity, which is already the case on $M$ since it is isotropic.
\end{proof}

Suppose that $T$ is a (possibly unital) truss. Recall that the \emph{slice category of (possibly unital) pointed $T$-modules over $\rR(T)$} has as objects the pairs $(G,G \to \rR(T))$ consisting of a (possibly unital) pointed $T$-module $G$ and a morphism of pointed $T$-modules $G \to \rR(T)$, and it has as morphisms those morphisms of pointed $T$-modules that make the obvious triangles commute. If we consider its full subcategory consisting of all $(G,\pi_G\colon G \to \rR(T))$ such that $\pi_G$ is surjective, then we may expand it harmlessly by allowing $\big(\{0\},0\colon \{0\} \to \rR(T)\big)$ to be the initial object. Denote the resulting category by $T\GMod\twoheaddownarrow \rR(T)$. It will be clear from the context if we are in the unital case or not.

\begin{theorem}\label{thm:slices}
    Let $T$ be a unital truss. The category of isotropic heaps of $T$-modules is equivalent to the category $T\GMod\twoheaddownarrow \rR(T)$, the full subcategory of slice category of pointed $T$-modules over $\rR(T)$ described above.
\end{theorem}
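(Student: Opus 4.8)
The plan is to build the equivalence by passing through pointed $T$-modules, using the tools already assembled. First I would reduce to the case at hand by invoking \cref{lem:isotropic}: heaps of $T$-modules (with $T$ not a priori unital) are the same as isotropic heaps of $T_u$-modules, so without essential loss I may assume $T$ is unital and work directly with isotropic heaps of $T$-modules. Next, I would use \cref{rem:TgrpsTHmods}: to a non-empty isotropic heap of $T$-modules $(M,\triangleright)$ I associate, after choosing a point $e\in M$, the pointed $T$-module $\Gg((M,\triangleright);e)=(\gG(M;e),\triangleright_e)$. The key observation is that isotropy gives me more than just a pointed $T$-module: since $1_T\triangleright_e n = n$ for all $n$, the map $\Gg((M,\triangleright);e)\to\rR(T)$ is forced once one sees that \emph{every} element of $M$ should be sent somewhere by a canonical surjection coming from the free object $\rR(T)=\star\sqcup\star$ of \cref{ex:starcupstar}. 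More precisely, I would send $(M,\triangleright)$ to the object $\big(\gG(M;e)\oplus\text{(something)}, \pi\big)$ --- but cleaner is this: by \cref{ex:starcupstar}, $\rR(T)$ \emph{is} the coproduct $\star\sqcup\star$ in $T\HMod$, with the two canonical maps $\eta_1,\eta_2\colon\star\to\rR(T)$. So a heap of $T$-modules $M$ with two chosen points $m_0,m_1\in M$ is the same as a morphism $\rR(T)\to M$ in $T\HMod$; retracting at $m_0$ turns this into a pointed $T$-module morphism $\Gg(\rR(T);(1_T,0))\to\Gg(M;m_0)$, i.e.\ (since the source is $\rR(T)$ as a pointed $T$-module) a pointed $T$-module map $\rR(T)\to\gG(M;m_0)$.

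**Constructing the two functors.** In one direction, given an isotropic heap of $T$-modules $(M,\triangleright)$, pick any $e\in M$; the composite of $\eta_1$ with structure maps should produce a canonical pointed $T$-module map $\rho_e\colon \rR(T)\to\gG(M;e)$. Its image is the sub-pointed-$T$-module generated by the retracts of the two chosen points --- but here is the subtlety: there are \emph{not} two a priori distinguished points in $M$. The resolution is that the correct assignment sends $(M,\triangleright)$ not to a single slice object but uses the universal surjection from a free object. Concretely: $\gG(M;e)$ receives a canonical map from $\rR(T)$ \emph{only if} $M$ is pointed. For the unpointed $M$, the right move is: $\gG(M;e)$ together with, as arrow to $\rR(T)$, \emph{there is none} --- rather, the equivalence goes the other way. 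So I would instead define $\Psi\colon T\GMod\twoheaddownarrow\rR(T)\to T\HMod_{\mathbf{is}}$ on an object $(G,\pi\colon G\twoheadrightarrow\rR(T))$ by taking the fibre $\pi^{-1}(\iota_T(1_T))=\pi^{-1}(1_T,1)\subseteq G$; this is a sub-heap (it is a coset of $\ker\pi$ under translation), and it inherits an isotropic heap of $T$-modules structure from $G$ via $t\triangleright_x y := t\cdot y - t\cdot x + x$, with isotropy coming from $\pi(t\cdot y - t\cdot x + x)=(t,1)(1,1)-(t,1)(1,1)+(1,1)=(1,1)$ landing back in the fibre only after checking $1_T\triangleright_x y=y$ --- which is immediate. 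For the quasi-inverse $\Phi\colon T\HMod_{\mathbf{is}}\to T\GMod\twoheaddownarrow\rR(T)$, send $(M,\triangleright)$ (non-empty) to $(\gG(M;e)\oplus_{?}\,,\,\pi)$ obtained by the free/quotient constructions --- but the honest clean choice is: $\Phi(M,\triangleright)$ is the pushout/coproduct data making $\rR(T)\to M$ and then retracting. I would let $\Phi(M,\triangleright)$ be $\gG(\rR(T)\sqcup_\star M\,;\,-)$ suitably; more simply, $\Phi(M,\triangleright) := \big(\gG(M \sqcup \star;\bar e),\pi\big)$ where $\pi$ is induced by the map $M\sqcup\star\to\star\sqcup\star=\rR(T)$ collapsing $M$.

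**The main obstacle and how to handle it.** The hard part will be verifying that $\Phi$ and $\Psi$ are mutually quasi-inverse, and in particular checking the natural isomorphisms $\Psi\Phi\cong\id$ and $\Phi\Psi\cong\id$ rather than equalities --- because of the choices of base points, these will genuinely be isomorphisms, not identities, and their naturality must be tracked carefully using the translation automorphisms $\tau_\bullet^\bullet$ exactly as in \cref{rem:TgrpsTHmods}. The key technical lemma is that for a surjective pointed $T$-module map $\pi\colon G\twoheadrightarrow\rR(T)$, the fibre $M=\pi^{-1}(1_T,1)$ generates $G$ together with $0_G$, and $\ker\pi$ is recovered as the pointed $T$-submodule $\langle M\text{-translates}\rangle$; dually, starting from $(M,\triangleright)$ and $e\in M$, the map $\rho\colon\rR(T)\to\gG(M;e)$ built from the free property of $\rR(T)=\star\sqcup\star$ (\cref{ex:starcupstar}, \cref{cor:freeTgrp*}(c)) need not be surjective, so $\Phi$ must first \emph{adjoin} a splitting --- this is why a direct sum with a free summand appears, mirroring \cref{con:coprod} and \cref{free-subgroup}. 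I expect the cleanest write-up defines $\Phi(M,\triangleright)$ as the pointed $T$-module $\gG(M;e)\oplus\rR(T)\big/\langle\rho(r)-(0,r)\rangle$ with the obvious projection to $\rR(T)$ --- i.e.\ the pushout of $\rR(T)\xleftarrow{\rho}\rR(T)\xrightarrow{\id}\rR(T)$ amalgamating over $\gG(M;e)$ --- and then checks that its fibre over $(1_T,1)$ is $(M,\triangleright)$ back again. Once this fibre computation is done, functoriality and naturality are routine diagram chases, and the unital/non-unital bookkeeping is absorbed by \cref{lem:isotropic} and \cref{prop:Dorroh}.
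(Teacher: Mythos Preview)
Your $\Psi$ (fibre over $(1_T,1)$) matches the paper's functor $\MMM$ exactly, and you do at one point name the correct quasi-inverse --- taking $\star\sqcup M$, retracting at $*$, and projecting to $\star\sqcup\star\cong\rR(T)$ via the map induced by $M\to\star$ --- which is precisely the paper's functor $\GGG$. So the core architecture is right, and had you committed to this, the proof would be essentially the paper's.

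The problem is that you then abandon this clean construction for a pushout $\gG(M;e)\oplus\rR(T)/\langle\rho(r)-(0,r)\rangle$ that does not make sense as written: your map $\rho\colon\rR(T)\to\gG(M;e)$ would require \emph{two} distinguished points in $M$ (since $\rR(T)=\star\sqcup\star$ is free on two generators), but you have only $e$. You notice the gap (``need not be surjective, so $\Phi$ must first adjoin a splitting''), yet the quotient you write down, if $\rho$ were defined, would collapse back to $\gG(M;e)$ --- not the object you want. The paper avoids all of this: by \cref{con:coprod} one has $\star\sqcup M\cong\Hh\big(\gG(M;e)\oplus\rR(T)\big)$ with no quotient, the projection $\pi_{\GGG(M)}$ is the second coordinate, and the fibre over $(1_T,1)$ is visibly $M$. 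That single line is the whole verification of $\MMM\GGG\cong\id$. For $\GGG\MMM\cong\id$, the paper uses that a surjection $\pi_G\colon G\twoheadrightarrow\rR(T)$ onto a free module splits, giving $G\cong\ker(\pi_G)\oplus\rR(T)$, and observes that $\ker(\pi_G)\cong\gG(\MMM(G);x)$ via translation by any $x\in\MMM(G)$.

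Also, drop the appeal to \cref{lem:isotropic} at the start: $T$ is already assumed unital in the statement, so no reduction is needed. That lemma is invoked only afterward, in \cref{cor:slice}, to extend the result to non-unital $T$.
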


\begin{proof}
    The two initial objects, $\big(\{0\},0\colon \{0\} \to \rR(T)\big)$ in $T\GMod\twoheaddownarrow \rR(T)$ and $\varnothing$ in $T\HMod$, correspond to each other, so let us ignore them in what follows.
    
    Let $(G,\pi_G)$ be an object in $T\GMod\twoheaddownarrow \rR(T)$ and consider $\MMM(G) \coloneqq \{x \in G \mid \pi_G(x) = (1_T,1)\}$. It is a heap of $T$-modules with respect to
    \[
    \begin{gathered}
    [x,y,z] \coloneqq x-y+z \qquad \text{and} \qquad t \triangleright_x y \coloneqq t\cdot y -t\cdot x + x
    \end{gathered}
    \]
    for all $x,y,z \in \MMM(G)$ and $t \in T$. 
    Furthermore, since a morphism from $(G,\pi_G)$ to $(H,\pi_H)$ in $T\GMod\twoheaddownarrow \rR(T)$ is a morphism of pointed $\rR(T)$-modules $f \colon G \to H$ such that $\pi_H \circ f = \pi_G$, it is clear that we can (co)restrict $f$ to $\MMM(f) \colon \MMM(G) \to \MMM(H)$ and the latter is a morphism of heaps of $T$-modules. This construction induces a functor
    \[\MMM\colon T\GMod\twoheaddownarrow \rR(T) \lra T\HMod.\]

    In the opposite direction, let $M$ be a non-empty heap of $T$-modules and let $*_M\colon M \to \star$ be the associated (unique) morphism of heaps of $T$-modules to the terminal object. We can consider the coproduct $\star \sqcup M$ in $T\HMod$ and the corresponding pointed $\rR(T)$-module obtained by retracting at $*$, i.e.~$\Gg(\star \sqcup M; *)$, as in Remark \ref{rem:TgrpsTHmods}.
    By taking the coproduct of $\id_\star$ and $*_M$ and by keeping in mind Example \ref{ex:starcupstar}, we realise that $\GGG(M) \coloneqq \Gg(\star \sqcup M; *)$ comes endowed with a morphism of pointed $\rR(T)$-modules $\pi_{\GGG(M)} \colon \Gg(\star \sqcup M; *) \to \Gg(\star \sqcup \star; *) \cong \rR(T)$ which is surjective, since $\pi_{\GGG(M)}(*) = (1_T,0)$ and $\pi_{\GGG(M)}(m) = (1_T,1)$ for all $m \in M$. Thus, $\GGG(M)$ is an object in $T\GMod\twoheaddownarrow \rR(T)$. Furthermore, if $f \colon M \to N$ is a morphism of heaps of $T$-modules, then we have the morphism of heaps of $T$-modules $\id_\star \sqcup f \colon \star \sqcup M \to \star \sqcup N$ induced by the universal property of the coproduct and, since it maps $*$ to $*$,  it induces a morphism of pointed $\rR(T)$-modules
    $\GGG(f) \colon \GGG(M) \to \GGG(N)$. In this way the functor is obtained
    \[\GGG \colon T\HMod \lra T\GMod\twoheaddownarrow \rR(T).\]
    Now, observe that if $M$ is a heap of $T$-modules, then $M \subseteq \MMM\GGG(M)$ by construction of $\pi_{\GGG(M)}$. Denote by $\zeta_M$ this inclusion, which is the corestriction of the canonical morphism $\eta_M \colon M \to \star \sqcup M$.
    It is a morphism of heaps of $T$-modules, natural in $M$. 

    On the other hand, let $(G,\pi_G)$ be an object in $T\GMod\twoheaddownarrow \rR(T)$. Consider the obvious inclusion $\gamma_G \colon \MMM(G) \to G$ (which is a morphism in $T\HMod$ if we look at $G$ with its heap of $T$-modules structure $\Hh(G)$ from Remark \ref{rem:TgrpsTHmods}) and $\gamma_\star \colon \star \to \Hh(G), * \mapsto 0_G$. By the universal property of the coproduct, these induce a unique morphism of heaps of $T$-modules $\theta_G \colon \star \sqcup \MMM(G) \to \Hh(G)$ which, in turn, induces a morphism of pointed $\rR(T)$-modules
    \[\epsilon_G \colon \GGG\MMM(G) = \Gg(\star\sqcup \MMM(G);*) \lra \Gg(\Hh(G);0_G) = G.\]
    It is straightforward to check that $\theta_G$ is natural in $G$, thus $\epsilon_G$ is natural in $G$, too. 

    We leave to the reader to check that the resulting natural transformations $\zeta$ and $\epsilon$ satisfy the conditions to be unit and counit of an adjunction, respectively. 

    To conclude, we show that $\zeta$ and $\epsilon$ are, in fact, natural isomorphisms. Let us begin with $\zeta$. By keeping in mind \cref{con:coprod}, pick $e \in M$ and consider the isomorphism
    \begin{equation}\label{eq:isopsi}
    \star \sqcup M \cong \Hh\Big(\gG\big(M;e\big) \oplus \rR(T)\Big),
    \end{equation}
    uniquely determined by $* \mapsto (e,0_\rR)$ and $m \mapsto (m,1_\rR)$, for all $ m\in M$. The projection $\pi_{\GGG(M)}\colon \Gg(\star \sqcup M;*) \to \rR(T)$ from above coincides, up to the aforementioned isomorphism \eqref{eq:isopsi}, with the canonical projection on the second summand
    \[\gG\big(M;e\big) \oplus \rR(T) \lra \rR(T), \qquad (m,r) \lto r.\]
    This makes it clear that $\zeta_M(M) = \MMM\GGG(M)$ and hence that $\zeta$ is bijective.
    
    To prove that $\epsilon$ is a natural isomorphism, let $(G,\pi_G)$ be in $T\GMod\twoheaddownarrow \rR(T)$ and let $x \in G$ be such that $\pi_G(x) = (1_T,1)$. In view of \cref{con:coprod}, 
    $\star \sqcup \MMM(G)$ is isomorphic to $\Hh\Big(\gG\big(\MMM(G);x\big) \oplus \rR(T)\Big)$
    in such a way that $*$ corresponds to $(x,0_\rR)$, so that
    \[\GGG\MMM(G) = \gG\Big(\star \sqcup \MMM(G);*\Big) \cong \gG\big(\MMM(G);x\big) \oplus \rR(T).\]
    Furthermore, consider the split short exact sequence of pointed $\rR(T)$-modules
    \[0 \to \ker(\pi_G) \xrightarrow{j} G \xrightarrow{\pi_G} \rR(T) \to 0\]
    with chosen splitting $\bar{x}\colon \rR(T) \to G, r \mapsto r \cdot x$. Since $\MMM(G) = x + \ker(\pi_G)$, the automorphism $\tau_x^{0_G}\colon G \to G$ induces an isomorphism of pointed $\rR(T)$-modules $\gG\big(\MMM(G);x\big) \cong \ker(\pi_G)$. Therefore, we obtain an isomorphism of heaps of $T$-modules
    \[\star \sqcup \MMM(G) \to \Hh\Big(\gG\big(\MMM(G);x\big) \oplus \rR(T)\Big) \xrightarrow{\Hh(\tau_{x}^{0_G} \oplus \id_{\rR})} \Hh\Big(\ker(\pi_G) \oplus \rR(T)\Big) \xrightarrow{\Hh(j + \bar{x})} \Hh(G)\]
    such that: if we precompose it with $\eta_\star$, it maps $*$ to $(x,0_\rR)$, $(0_G,0_\rR)$ and, finally, to $0_G$; if we precompose it with $\eta_{\MMM(G)}$ then it maps every $y \in \MMM(G)$ to $(y,1_\rR)$, $(y-x,1_\rR)$ and, finally, to $(y-x)+1_\rR\cdot x = y$ (because unital pointed $T$-modules are unital $\rR(T)$-modules). By the uniqueness part of the universal property of the coproduct, this isomorphism of heaps of $T$-modules must be equal to $\theta_G$. As a consequence, $\epsilon_G$ is an isomorphism as well.
\end{proof}

\begin{corollary}\label{cor:slice}
    Let $T$ be a truss, not necessarily unital. Then the category of heaps of $T$-modules is equivalent to the full subcategory $T\GMod\twoheaddownarrow \rR(T)_u$ of the slice category of pointed $T$-modules over $\rR(T)_u$.
\end{corollary}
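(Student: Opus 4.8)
The plan is to reduce to the unital case and then apply \cref{thm:slices}, converting the slice category it produces into the one required by means of the unitalisation results \cref{lem:isotropic} and \cref{prop:Dorroh}, the isomorphism of categories in \cref{thm:dummyP}, and the classical fact (recalled in \S\ref{ssec:TrandRing}) that unital modules over the Dorroh extension $R_u$ of a ring $R$ are the same thing as arbitrary $R$-modules.

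First, \cref{lem:isotropic} gives an isomorphism of categories $T\HMod\cong T_u\HMod_{\mathbf{is}}$, and since $T_u$ is unital, \cref{thm:slices} (applied to $T_u$) yields an equivalence $T_u\HMod_{\mathbf{is}}\simeq T_u\GMod\twoheaddownarrow\rR(T_u)$, where the right-hand side involves unital pointed $T_u$-modules and $\rR(T_u)$ carries its canonical (unital) pointed $T_u$-module structure. It remains to identify this slice category with $T\GMod\twoheaddownarrow\rR(T)_u$. By \cref{prop:Dorroh}, $\rR(T_u)\cong\rR(T)_u$ as unital rings, so transporting along this isomorphism replaces the base object $\rR(T_u)$ by $\rR(T)_u$ with its regular module structure. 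By \cref{thm:dummyP} in its unital form, unital pointed $T_u$-modules are the same as unital $\rR(T_u)$-modules, hence --- by \cref{prop:Dorroh} again --- unital $\rR(T)_u$-modules; since $\rR(T)_u$ is the Dorroh extension of the ring $\rR(T)$, these are exactly the $\rR(T)$-modules (the identification being the identity on underlying abelian groups); and by \cref{thm:dummyP} once more, $\rR(T)$-modules are the same as pointed $T$-modules. Following the base object $\rR(T)_u$ along this composite, it is sent first to $\rR(T)_u$ regarded as an $\rR(T)$-module by restriction along the ring map $\rR(T)\hookrightarrow\rR(T)_u$, and then to $\rR(T)_u$ regarded as a pointed $T$-module by restriction of scalars along $T\to\rR(T)\to\rR(T)_u$ --- which is exactly the structure appearing in the statement --- while surjectivity of morphisms is preserved at every step. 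Hence $T_u\GMod\twoheaddownarrow\rR(T_u)\cong T\GMod\twoheaddownarrow\rR(T)_u$, and composing with the first two steps gives the claimed equivalence; as in the proof of \cref{thm:slices}, the degenerate initial objects ($\varnothing$ and $(\{0\},0)$) correspond to each other.

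The only genuine work is the bookkeeping in the middle step: one has to keep track of which module structure the base object carries at each stage --- regular $\rR(T)_u$-module, $\rR(T)$-module by restriction along the ring map, pointed $T$-module by restriction of scalars --- and verify that these successive structures agree under the stated identifications. Since every functor in the chain is the identity on underlying abelian heaps (respectively groups) and preserves epimorphisms, this is a matter of unwinding definitions rather than a new construction, and no ingredient beyond \cref{lem:isotropic}, \cref{thm:slices}, \cref{prop:Dorroh}, and \cref{thm:dummyP} is needed.
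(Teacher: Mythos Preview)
Your proof is correct and follows essentially the same route as the paper's: apply \cref{lem:isotropic} to pass to $T_u\HMod_{\mathbf{is}}$, invoke \cref{thm:slices} for the unital truss $T_u$, and then use \cref{prop:Dorroh}, \cref{thm:dummyP}, and the Dorroh-extension fact about modules to rewrite the resulting slice category as $T\GMod\twoheaddownarrow \rR(T)_u$. Your version is somewhat more explicit about tracking the base object through the chain of identifications, but the ingredients and the order in which they are used are the same.
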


\begin{proof}
    For the sake of clarity and only in this proof and \cref{ex:empty}, let us denote by $R\Mod\twoheaddownarrow M$ the slice category of $R$-modules projecting onto $M$, together with $0$. In this way, the slice category of pointed unital $T_u$-modules over $\rR(T_u)$ would be $\rR(T_u)_1\Mod \twoheaddownarrow \rR(T_u)$, where the index $(-)_1$ highlights the fact that the modules are supposed to be unital. 
    For $T$ an arbitrary truss, \cref{lem:isotropic} entails that $T\HMod \cong T_u\HMod_{\mathbf{is}}$. Then, from \cref{thm:dummyP} and \cref{thm:slices} it follows that 
    \[T_u\HMod_{\mathbf{is}} \cong \rR(T_u)_1\Mod \twoheaddownarrow \rR(T_u).\]
    By applying further \cref{prop:Dorroh}, we get that
    \[\rR(T_u)_1\Mod \twoheaddownarrow \rR(T_u) \cong {\rR(T)_u}_1\Mod \twoheaddownarrow \rR(T)_u \cong \rR(T)\Mod \twoheaddownarrow \rR(T)_u,\]
    where the last equivalence is a well-known consequence of the universal property of the Dorroh extension for rings. Therefore,
    \[T\HMod \cong \rR(T)\Mod \twoheaddownarrow \rR(T)_u,\]
    as claimed, by \cref{thm:dummyP} again.
\end{proof}

\begin{example}\label{ex:empty}
    For $T = \varnothing$, \cref{ex:ahTHmod} and \cref{cor:slice} entails that
    \begin{equation*}
    \Ah = \varnothing\HMod \cong \rR(\varnothing)\Mod\twoheaddownarrow \rR(\varnothing)_u \cong \Ab\twoheaddownarrow\ZZ. \qedhere
    \end{equation*}
\end{example}

The description granted by \cref{thm:slices} and \cref{cor:slice} makes it easier, even if less ``elementary'', to describe some familiar categorical constructions in the category of heaps of $T$-modules. For example, if $(G,\pi_G)$ and $(H,\pi_H)$ are two heaps of $T$-modules, then their coproduct is simply $(G \oplus H, \pi_G + \pi_H)$ where the direct sum is taken in $\rR(T)\Mod$. Similarly, the coequalizer of two parallel morphisms $f,g \colon (G,\pi_G) \to (H,\pi_H)$ is $\big(H/\Ima(f-g),\tilde{\pi}_H\big)$ where $\tilde{\pi}_H \colon H/\Ima(f-g) \to \rR(T)$ is the factorisation of $\pi_H$ through the quotient.

\begin{remark}
    For the convenience of the reader, let us spend a few words more on the case of coproducts. Recall that we already saw an explicit realization of the coproduct in $T\HMod$ in \cref{con:coprod}. This is consistent with the one offered by \cref{thm:slices}. Let us show this for the coproduct of three heaps of $T$-modules $M,N,P$. The latter would be, according to \cref{thm:slices}, the preimage of $1_{\rR(T)}$ along the unique morphism 
    \[\pi \colon \GGG(M) \oplus \GGG(N) \oplus \GGG(P) \to \rR(T)\]
    induced by $\pi_{\GGG(M)}$, $\pi_{\GGG(N)}$ and $\pi_{\GGG(P)}$. As we did in the proof of \cref{thm:slices} itself, we may realize $(\GGG(M),\pi_{\GGG(M)})$ as $\gG(M;o_M) \oplus \rR(T)$ for some $o_M \in M$, with the projection on the second summand, and analogously for $N$ and $P$. Therefore, up to these realizations,
    \[
    \begin{gathered}
    \pi \colon \gG(M;o_M) \oplus \rR(T) \oplus \gG(N;o_N) \oplus \rR(T) \oplus \gG(P;o_P) \oplus \rR(T) \lra \rR(T), \\ 
    (m,x,n,y,p,z) \lto x + y + z,
    \end{gathered}
    \]
    and hence
    \[
    \begin{aligned}
    M \sqcup N \sqcup P & = \MMM\big(\gG(M;o_M) \oplus \rR(T) \oplus \gG(N;o_N) \oplus \rR(T) \oplus \gG(P;o_P) \oplus \rR(T)\big) \\
    & \cong \Hh\big(\gG(M;o_M) \oplus \gG(N;o_N) \oplus \gG(P;o_P) \oplus \rR(T) \oplus \rR(T)\big)
    \end{aligned}
    \]
    as prescribed by \cref{con:coprod}.
\end{remark}

\section{Exact sequences of heaps of modules}\label{sec:five}

In this section we are concerned with the description of (short) exact sequences of abelian heaps and heaps of $T$-modules. 
In view of \cref{ex:ahTHmod}, it is enough to deal with exact sequences of heaps of $T$-modules.

\subsection{Exact sequences of heaps of modules: ring-theoretic approach}\label{sssec:exheap}

Let $T$ be a truss. By adapting the definition presented in \cite[\S6.1]{BrzRyb:fun}, let 
\begin{equation}\label{eq:ses}
M \xrightarrow{f} N\xrightarrow{g} P
\end{equation}
be a sequence of heaps of $T$-modules and morphisms of heaps of $T$-modules. We may suppose, without loss of generality, that $P \neq \varnothing$. 

\begin{definition}\label{def:ses}
We say that the sequence \eqref{eq:ses} is \textit{exact in $N$ at $e$} if there exists $e\in g(N)$ such that $f(M)=g^{-1}(e)$.
\end{definition}

\begin{remark}\label{rem:sesRTmods}
\cref{def:ses} can also be interpreted as follows. Suppose that \eqref{eq:ses} is exact in $N$ at $o_P \in g(N) \subseteq P$. If we fix an element $o_N\in N$ such that $g(o_N)=o_P$, then the induced pointed $T$-module structures (see \cref{rem:TgrpsTHmods}) on $N$ and $P$ are such that $o_N$ and $o_P$ are the neutral elements of the corresponding $\rR(T)$-modules (see \cref{thm:dummyP}) and $g$ is a morphism of $\rR(T)$-modules. If moreover we fix an element $o_M \in M$ such that $f(o_M) = o_N$ and we consider the induced $\rR(T)$-module structure on $M$, then $f$ will be also a morphism of $\rR(T)$-modules. In this context, the sequence $\Gg(M;o_M) \xrightarrow{f} \Gg(N;o_N) \xrightarrow{g} \Gg(P;o_P)$ is an exact sequence of $\rR(T)$-modules. The converse clearly holds: if there are $o_M \in M$, $o_N \in N$, $o_P \in P$ such that the sequence of $\rR(T)$-modules $\Gg(M;o_M) \xrightarrow{f} \Gg(N;o_N) \xrightarrow{g} \Gg(P;o_P)$ is exact, then \eqref{eq:ses} is exact in $N$ at $o_P$.
\end{remark}

By a slight abuse of notation, we may often simply refer to the $\rR(T)$-module $M$ instead of $\Gg(M;o_M)$, once $o_M$ has been chosen. Since the underlying set is the same, this allows us to lighten the notation and increase the readability.

\begin{definition}
A sequence of morphisms of heaps of $T$-modules
$$\cdots \to M_{n-1}\overset{f_n}\to M_n\overset{f_{n+1}}\to M_{n+1}\to \dots$$
is \textit{exact} if it is exact in $M_n$ for all $n$. 
\end{definition}

\begin{remark}
In view of \cref{rem:sesRTmods}, it follows that for every $n$ we can find $\rR(T)$-module structures on $M_{n-1}$, $M_n$, and $M_{n+1}$ such that $M_{n-1}\overset{f_n}\to M_n\overset{f_{n+1}}\to M_{n+1}$ is an exact sequence of $\rR(T)$-modules. However, it is easy to see that if we consider chains with more than two morphisms, then the definition does not necessarily allow us to choose group structures on all heaps of $T$-modules such that the sequence is an exact sequence of $\rR(T)$-modules.
\end{remark}

A bit more generally than \cref{rem:sesRTmods}, but at the price of changing the morphisms involved, we have the following result. 

\begin{proposition}\label{Prop:exofheap}
Let $M \overset{f}\to N \overset{g}\to P$ be a sequence of heap of $T$-modules morphisms. Suppose that the structures on $M$, $N$, and $P$ are induced by some corresponding $\rR(T)$-module structures $\Gg(M;o_M)$, $\Gg(N;o_N)$, $\Gg(P;o_P)$, such that $f=\alpha+h$, $g=\beta+k$, where $\alpha$ and $\beta$ are morphisms of $\rR(T)$-modules, $h = f(o_M)\in N$, and $k = g(o_N)\in P$. Then  $M \overset{f}\to N \overset{g}\to P$ is an exact sequence of heaps of $T$-modules if and only if $M \overset{\alpha}\to N \overset{\beta}\to P$ is an exact sequence of $\rR(T)$-modules.
\end{proposition}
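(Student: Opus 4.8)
The plan is to rewrite \cref{def:ses} for the sequence $M\xrightarrow{f}N\xrightarrow{g}P$ as a statement about cosets inside the abelian groups $\Gg(N;o_N)$ and $\Gg(P;o_P)$ and then to recognise the resulting condition as $\Ima(\alpha)=\ker(\beta)$. Throughout, $+$ and $-$ will refer to the group operations of the retracts fixed in the hypothesis, all of which are abelian because pointed $T$-modules are abelian groups; note that $\alpha$ and $\beta$ preserve the chosen zeros, so $f(o_M)=h$ and $g(o_N)=k$, consistently with the decompositions $f=\alpha+h$ and $g=\beta+k$.

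\textbf{Key steps.} First I would record two elementary identities. From $f=\alpha+h$ one gets $f(M)=\alpha(M)+h$, which is a coset of the subgroup $\Ima(\alpha)=\alpha(M)\subseteq N$. From $g=\beta+k$ one gets, for every $e\in P$, that $g^{-1}(e)=\beta^{-1}(e-k)$; in particular $g^{-1}(e)\neq\varnothing$ precisely when $e-k\in\Ima(\beta)$, which is exactly the condition $e\in g(N)$. Hence the heap-exactness of \eqref{eq:ses} — the existence of $e\in g(N)$ with $f(M)=g^{-1}(e)$ — is equivalent to the existence of some $d\in\Ima(\beta)$ with $\alpha(M)+h=\beta^{-1}(d)$, the right-hand side being a coset of $\ker(\beta)$. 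Next I would invoke the coset-matching principle in an abelian group: if a coset of a subgroup $A$ equals a coset of a subgroup $B$, then $A=B$ (subtract any common element from the two cosets). Applied here, this forces $\Ima(\alpha)=\ker(\beta)$, i.e.\ exactness of $M\xrightarrow{\alpha}N\xrightarrow{\beta}P$ as $\rR(T)$-modules. For the converse, assuming $\Ima(\alpha)=\ker(\beta)$ I would take $d:=\beta(h)\in\Ima(\beta)$ and $e:=d+k=g(h)\in g(N)$; then $g^{-1}(e)=h+\ker(\beta)=h+\Ima(\alpha)=\alpha(M)+h=f(M)$, so \eqref{eq:ses} is exact in $N$ at $e$. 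This closes the equivalence.

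\textbf{Main obstacle.} There is no serious difficulty here: the argument is essentially a translation through the retract construction of \cref{rem:TgrpsTHmods} and \cref{thm:dummyP}. The one point that demands care — and what I regard as the (modest) main obstacle — is the mismatch between the freedom in choosing the base element $e$ allowed by \cref{def:ses} and the rigidity of the $\rR(T)$-module exactness condition $\Ima(\alpha)=\ker(\beta)$, which is a statement about \emph{subgroups} rather than cosets. The coset-matching principle is precisely what bridges this gap; alongside it, one must be disciplined about which ambient group — $\Gg(N;o_N)$ or $\Gg(P;o_P)$ — each manipulation takes place in, and should note that all the images and preimages involved are automatically nonempty because the retracts exist.
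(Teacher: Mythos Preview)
Your proposal is correct and follows essentially the same route as the paper: both rewrite $f(M)=\alpha(M)+h$ and $g^{-1}(e)=\beta^{-1}(e-k)$ as cosets, deduce $\Ima(\alpha)=\ker(\beta)$ from their equality, and for the converse choose $e=\beta(h)+k$. The only cosmetic difference is that you invoke the coset-matching principle abstractly, whereas the paper carries out the subtraction by hand (computing $gf(m)=\beta\alpha(m)+\beta(h)+k$ to split off $\beta\alpha=0$ and $e=\beta(h)+k$).
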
 

\begin{proof}
Suppose that $M\overset{f}\to N\overset{g}\to P$ is exact in $N$ at $e \in g(N)$. That is, $f(M)=g^{-1}(e)$. It follows that $e = gf(m) = \beta\alpha(m)+\beta(h)+k$ for all $m \in M$ and hence $\beta\alpha = o_P$ and $\beta(h)+k=e$. Remark that $h + \ker(\beta) \subseteq g^{-1}(e)$, but also the reverse inclusion holds: if $\beta(h) + k = e = g(n) = \beta(n) + k$, then $n \in h + \ker(\beta)$.
Since $f(M) = \alpha(M) + h$, it follows that $f(M) = g^{-1}(e)$ entails that $\Ima(\alpha) = \ker(\beta)$, and hence that $M \overset{\alpha}\to N \overset{\beta}\to P$ is exact in $N$.

Conversely, take $e \coloneqq \beta(h)+k$. Then $gf=e$ and so $f(M)\subseteq g^{-1}(e)$. In the opposite direction, if $g(x)=e$ then $\beta(x)+k=\beta(h)+k$ and hence $x\in h + \ker(\beta) = h+\Ima(\alpha)=f(M)$. 
\end{proof}

\begin{remark}
 If $M\overset{f}\to N\overset{g}\to P$ is an exact sequence of heaps of $T$-modules, then all the ``perturbed" sequences $M\xrightarrow{f+n} N\xrightarrow{g+p} P$, for $n\in N$ and $p\in P$, are exact sequences of heaps of $T$-modules, where $+$ is with respect to a certain choice of $o_M \in M$, $o_N \in N$ and $o_P \in P$.
\end{remark}

In this framework, we may say that a sequence $M \xrightarrow{f} N \xrightarrow{g} P$ is a \emph{short exact sequence} if it is exact in the above sense and, moreover, $f$ is injective and $g$ is surjective.

\subsection{Exact sequences of heaps of modules: Barr's approach}\label{ssec:Barr}
For the sake of completeness, in this section we would like to rely on the fact that $T\HMod$, being an algebraic category (i.e.\ a category of models for an algebraic theory in the sense of, e.g., \cite[\S3.9]{Bor-Hand}), is an \emph{exact category} in the sense of Barr \cite{Barr} (see also \cite[\S2.6]{Bor-Hand} or \cite[A.5]{Bor-Bou:mal} for a more concise introduction). Hence, we may discuss short exact sequences of heaps of $T$-modules in the sense of, e.g., \cite[\S2.3]{Bor-Hand}. 
In this framework, a \emph{short exact sequence (in the sense of Barr)} of heaps of $T$-modules is a sequence
\[
\xymatrix{
M \ar@<+0.4ex>[r]^-{f} \ar@<-0.4ex>[r]_-{g} & N \ar[r]^-{h} & P
}
\]
such that $(f,g)$ is the kernel pair of $h$ and $h$ is the coequalizer of $(f,g)$. 

In a way similar to the case of abelian categories \cite[Proposition 2.3.2]{Bor-Hand}, there is a close, but not completely direct, correspondence between short exact sequences of heaps of $T$-modules in the sense of Barr and in the ring-theoretic sense.

\begin{theorem}\label{thm:exex}
Let    
 \begin{equation}\label{diag.fork}
    \xymatrix{
M \ar@<+0.4ex>[r]^-{f} \ar@<-0.4ex>[r]_-{g} & N \ar[r]^-{h} & P
}    
    \end{equation}
    be a diagram of heaps of $T$-modules. Then the following statements are equivalent:
    \begin{zlist}
        \item\label{item:4.23.1} The sequence \eqref{diag.fork} is exact in the sense of Barr.
        \item\label{item:4.23.2} There exists $o\in N$ (hence for all $o\in N$) such that, for $h_o \colon N\times N \to P$ given by $h_o(a,b) \coloneqq h\big([a,b,o]\big) = \big[h(a),h(b),h(o)\big]$ for all $a,b\in N$, the sequence
\begin{equation}\label{seq.ex}
\xymatrix{M \ar[r]^-{(f,g)} & N\times N \ar[r]^-{h_o} & P},   
\end{equation}
is a short exact sequence of heaps of $T$-modules, exact in $N \times N$ at $h(o)$.
    \end{zlist}
\end{theorem}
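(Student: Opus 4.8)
# Proof Proposal

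\textbf{Overall strategy.} The plan is to pass to the ring-theoretic picture via \cref{thm:dummyP} and the retraction functors of \cref{rem:TgrpsTHmods}, reducing both conditions to statements about $\rR(T)$-modules, and then to invoke the classical correspondence between kernel pairs/coequalizers and short exact sequences in a category of modules over a ring. The key observation is that, once we fix a point $o\in N$ and retract everything at the corresponding images, the exactness in the sense of Barr of \eqref{diag.fork} becomes the statement that $(f,g)$ is the kernel pair of $h$ and $h$ is its coequalizer in $\rR(T)\Mod$ (possibly after the harmless perturbation recorded after \cref{Prop:exofheap}), while the short exactness of \eqref{seq.ex} becomes, via \cref{Prop:exofheap}, an ordinary short exact sequence of $\rR(T)$-modules.

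\textbf{From \ref{item:4.23.1} to \ref{item:4.23.2}.} First I would show the choice of $o\in N$ is immaterial: both the kernel pair and the coequalizer are defined up to canonical isomorphism, and translating $o$ only composes $h_o$ with a translation automorphism, which is an isomorphism of heaps of $T$-modules by the remark following \eqref{eq:basechange}. So fix $o\in N$ and set $o_P \coloneqq h(o)$, retract $N$ at $o$ and $P$ at $o_P$, so that $h$ becomes $\rR(T)$-linear (using \cref{rem-heap-morphism} and \cref{thm:dummyP}). Now $(f,g)$ being the kernel pair of $h$ means exactly that $M$, with the two projections, represents $\{(a,b)\in N\times N \mid h(a)=h(b)\}$; and $h$ being the coequalizer of $(f,g)$ means $h$ is a regular epi, i.e.\ surjective (since in an algebraic category regular epis are exactly the surjections). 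The map $h_o\colon N\times N\to P$, $(a,b)\mapsto h(a)-h(b)+o_P$, is $\rR(T)$-linear once $N\times N$ is retracted at $(o,o)$, is surjective because $h$ is, and its fibre over $o_P$ is precisely $\{(a,b)\mid h(a)=h(b)\}$, which is the image of $(f,g)$ by the kernel pair property. Hence $M\xrightarrow{(f,g)} N\times N \xrightarrow{h_o} P$ is an exact sequence of $\rR(T)$-modules in the sense of \cref{rem:sesRTmods}, with $(f,g)$ injective (it is a monomorphism as a kernel pair component into $N\times N$ — here one uses that $f,g$ jointly monic forces $(f,g)$ injective) and $h_o$ surjective; by \cref{rem:sesRTmods} this is a short exact sequence of heaps of $T$-modules exact at $h(o)$.

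\textbf{From \ref{item:4.23.2} to \ref{item:4.23.1}.} Conversely, assume \eqref{seq.ex} is short exact at $h(o)$. Retracting at compatible points and using \cref{Prop:exofheap} (or directly \cref{rem:sesRTmods}), we get that $(f,g)\colon M\to N\times N$ is injective with image exactly $h_o^{-1}(h(o)) = \{(a,b)\in N\times N \mid h(a)=h(b)\}$, and $h_o$ is surjective, hence so is $h$. Injectivity of $(f,g)$ with that image is precisely the statement that $M$ together with the two composites $\pr_1\circ(f,g) = f$ and $\pr_2\circ(f,g) = g$ is the kernel pair of $h$ in $\Set$, and since limits in $T\HMod$ are computed in $\Set$ (\cref{ex:prods} and the discussion of equalizers/pullbacks), $(f,g)$ is the kernel pair of $h$ in $T\HMod$. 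Finally, $h$ being a surjection in an algebraic category is a regular epimorphism, and a regular epimorphism is always the coequalizer of its own kernel pair; so $h = \mathsf{coeq}(f,g)$. This gives \ref{item:4.23.1}.

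\textbf{Anticipated main obstacle.} The routine calculations (that $h_o$ is a heap-of-modules morphism, that retraction is compatible, that $(f,g)$ is jointly monic iff injective) are unproblematic. The one point requiring genuine care is the bookkeeping of base points: $h$ need not preserve any natural point, and the kernel pair condition is base-point free, so one must check that retracting $N$ at an \emph{arbitrary} $o$ and $P$ at $h(o)$ genuinely makes $h$ linear and that the fibre description of $h_o^{-1}(h(o))$ is the correct translate of the kernel-pair relation — essentially verifying that \cref{Prop:exofheap}'s perturbation mechanism is exactly what converts "kernel pair of $h$" into "kernel of the linearized $h_o$." I expect this matching of the Barr-exactness data with the perturbed ring-theoretic exactness of \cref{Prop:exofheap} to be the crux; everything else is an application of standard facts about regular epimorphisms and kernel pairs in algebraic categories, valid here since $T\HMod$ is monadic over $\Set$.
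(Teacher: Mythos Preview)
Your argument is correct, but it takes a genuinely different route from the paper's. The paper proves both implications by direct element-chasing inside $T\HMod$ using only the heap bracket: for $(1)\Rightarrow(2)$ it verifies $\Ima(f,g)=h_o^{-1}(h(o))$ by unpacking the Mal'cev identity $[h(a),h(b),h(o)]=h(o)\Leftrightarrow h(a)=h(b)$, and for $(2)\Rightarrow(1)$ it checks the universal properties of kernel pair and coequalizer explicitly, building the factoring morphism $\gamma\colon Q\to M$ by hand and constructing $\beta\colon P\to Q$ from an auxiliary map $\alpha_o(a,b)=[\alpha(a),\alpha(b),\alpha(o)]$. No passage to $\rR(T)$-modules is made. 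Your approach instead retracts at $o$ and $h(o)$, linearises everything via \cref{thm:dummyP} and \cref{rem:sesRTmods}, and then invokes the standard package for algebraic categories (limits computed in $\Set$; surjective $=$ regular epi; regular epi $=$ coequalizer of its kernel pair). This buys you a shorter argument that sits naturally with the ring-theoretic philosophy of the section, at the cost of importing more external categorical machinery; the paper's version is more self-contained and makes the heap-theoretic content visible. One small bookkeeping point worth making explicit in your write-up: to linearise $(f,g)$ you should retract $M$ at the unique $m_o$ with $(f,g)(m_o)=(o,o)$, whose existence is guaranteed precisely because $(o,o)$ lies in the kernel-pair relation.
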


\begin{proof}
    Assume that assertion (\ref{item:4.23.1}) holds and take any $o\in N$. Define $h_o$ as in assertion (\ref{item:4.23.2}). Since $h$ and $[-,-,-]$ are morphisms of heaps of $T$-modules (see \cref{ex:prods}), $h_o$ is a morphism of heaps of $T$-modules, too, and since \eqref{diag.fork} is a fork of homomorphisms of heaps of $T$-modules, we obtain that $h_o \circ (f,g) \equiv h(o)$ and so $\Ima (f,g)\subseteq h_o^{-1}\big(h(o)\big)$. Conversely, if $(a,b) \in h_o^{-1}\big(h(o)\big)$, then $[h(a),h(b),h(o)] =h(o)$ and so $h(a)=h(b)$. Thus, $(a,b)$ is an element of the kernel pair of $h$. By assumption, $M$ is such a kernel pair and $f$ and $g$ are its structural maps, which implies that $(a,b)\in \Ima (f,g)$. Therefore, $\Ima (f,g) = h_o^{-1}\big(h(o)\big)$. Since $h$ is the structural map of a coequalizer, it is an epimorphism and hence surjective (because $T\HMod$ is an algebraic category and so \cite[Corollary 3.5.3]{Bor-Hand} applies). Therefore, for any $p\in P$ there exists $n\in N$, such that $p=h(n) = [h(n),h(o),h(o)] = h_o(n,o)$. This means that $h_o$ is a surjective homomorphism. The map $(f, g)$ is injective by the realization of the kernel pair as $\{(m,n) \in N \times N \mid h(m)=h(n)\}$. In summary, the assertion (\ref{item:4.23.2}) holds.

    Now assume that assertion (\ref{item:4.23.2}) holds. First we show that \eqref{diag.fork} is a kernel pair. Since, by hypothesis, $\Ima (f, g)\subseteq h_o^{-1}\big(h(o)\big)$, we have that 
    \[
    h(o) = \big(h_o\circ (f, g)\big)(m) = \big[(h\circ f) (m), (h\circ g)(m) ,h(o
    )\big],
    \]
    for all $m\in M$, which implies that $h\circ f = h \circ g$. Hence \eqref{diag.fork} is a fork.  Let $\alpha,\beta: Q\to N$ be morphisms of heaps of $T$-modules such that $h \circ \alpha = h \circ \beta$.  Then $h_o \circ (\alpha, \beta) = h(o)$, and so, by exactness of \eqref{seq.ex} in $N\times N$ at $h(o)$, for all $q\in Q$, there exists $m_q\in M$, such that $\alpha(q) = f(m_q)$ and $\beta(q) = g(m_q)$. Since $(f, g)$ is injective, for a given $q$, $m_q$ is unique. This allows one to define a homomorphism of heaps of $T$-modules $\gamma: Q\to M, q\mapsto m_q$. By construction, this is the unique homomorphism such that $\alpha = f \circ \gamma$ and $\beta =g \circ \gamma$. Therefore, \eqref{diag.fork} is a kernel pair.
    To check that it is coequalizer, take any heap of $T$-modules morphism $\alpha: N\to Q$ such that $\alpha \circ f = \alpha \circ g$. Set $\alpha_{o} \colon N\times N \to Q, (a,b)\mapsto \big[\alpha(a),\alpha(b), \alpha(o)\big]$. Then $\alpha_{o} \circ (f, g) = \alpha(o)$ and hence there exists unique $\beta \colon P\to Q$ such that $\alpha_{o} = \beta \circ h_{o}$. Note that, for all $n\in N$,
    \[
    (\beta\circ h)(n) = \beta\big([h(n), h(o), h(o)]\big) = (\beta \circ h_o)(n,o) = \alpha_{o}(n,o) = [\alpha(n),\alpha(o),\alpha(o)] = \alpha(n),
    \]
    i.e., $\alpha = \beta \circ h$. Finally, by surjectivity of $h_o$, for all $p\in P$ there exists $(a,b)\in N\times N$ such that $p = h_o(a,b) = \big[h(a), h(b), h(o)\big] = h\big([a,b,o]\big)$ and hence $h$ is surjective. This completes the proof that \eqref{diag.fork} is a short exact sequence in the sense of Barr.
\end{proof}

\section*{Acknowledgments}
The research of Simion Breaz is supported by a grant of the Ministry of Research, Innovation and Digitization, CNCS/CCCDI--UEFISCDI, project number PN-III-P4-ID-PCE-2020-0454, within PNCDI III.

The research of Tomasz Brzezi\'nski is partially supported by the National Science Centre, Poland, grant no. 2019/35/B/ST1/01115. 

The research of Bernard Rybo\l owicz is supported by the EPSRC grant EP/V008129/1.

This paper was written while Paolo Saracco was a Charg\'e de Recherches of the Fonds de la Recherche Scientifique - FNRS and a member of the ``National Group for Algebraic and Geometric Structures and their Applications'' (GNSAGA-INdAM).

\end{document}